\title[Complexity and Ramsey largeness]
{Complexity and Ramsey largeness of sets of oracles separating complexity classes}
\author{Alex Creiner}
\author{Stephen Jackson}
\theoremstyle{theorem}
\newtheorem{theorem}{Theorem}[section]
\newtheorem{lemma}[theorem]{Lemma}
\newtheorem{corollary}[theorem]{Corollary}
\theoremstyle{definition}
\newtheorem{definition}[theorem]{Definition}
\newtheorem{question}[theorem]{Question}
\theoremstyle{plain}
\newcommand{\sP}{\mathcal{P}}
\newcommand{\ac}{\mathsf{AC}}
\newcommand{\bS}{\boldsymbol{\Sigma}}
\newcommand{\bP}{\boldsymbol{\Pi}}
\newcommand{\bD}{\boldsymbol{\Delta}}
\newcommand{\np}{\bm{NP}}
\newcommand{\cnp}{\bm{co}\text{-}\bm{NP}}
\newcommand{\bqp}{\bm{BQP}}
\newcommand{\ps}{\bm{PSPACE}}
\newcommand{\ph}{\bm{PH}}
\newcommand{\ww}{\omega^\omega}
\newcommand{\sm}{\setminus}
\newcommand{\conc}{{}^\smallfrown}
\newcommand{\res}{\restriction}
\newcommand{\ad}{\mathsf{AD}}
\newcommand{\co}{co{\text-}}
\newcommand{\N}{\mathbb{N}}
\begin{document}

\begin{abstract}
We prove two sets of results concerning computational complexity classes. 
The first concerns a variation of the random oracle hypothesis 
posed by Bennett and Gill after they showed that relative to a randomly chosen 
oracle, $\bm{P}\neq \bm{NP}$ with probability $1$. 
This hypothesis was quickly disproven in several ways, most famously in 1992
with the result that $\bm{IP} = \bm{PSPACE}$, in spite of the
classes being shown unequal with probability $1$. Here we propose 
a variation of what it means to be ``large'' using the Ellentuck topology.
In this new context, we demonstrate that the set of oracles separating
$\bm{NP}$ and $\bm{coNP}$ is not small, and obtain similar
results for the separation of $\bm{PSPACE}$ from $\bm{PH}$
along with the separation of $\bm{NP}$ from $\bm{BQP}$.
We demonstrate that this version of the hypothesis turns it into a
sufficient condition for unrelativized relationships, at least
in the three cases considered here. Second, we example the descriptive 
complexity of the classes of oracles providing the separations for 
these various classes, and determine their exact placement in the 
Borel hierarchy. 
\end{abstract}

\maketitle

\section{Introduction}

In 1975, a paper by Baker, Gill and Solovay \cite{solovayOriginal} demonstrated the subtlety of the $\bm{P}$ versus $\bm{NP}$ question by demonstrating the existence of oracles relative to which $\bm{P} \neq \bm{NP}$, as well as oracles relative to which $\bm{P} = \bm{NP}$. Subsequently Bennett and Gill in 1981 \cite{Bennett1981RelativeTA} demonstrated that if these classes were relativized to an oracle drawn at random (i.e., one in which every string is either in or out with probability $\frac{1}{2}$) then $\bm{P} \neq \bm{NP}$ with probability $1$. 
They saw this as potentially strong evidence that $\bm{P} \neq \bm{NP}$ were not equal in our reality. Upon showing similar results for other pairs of complexity classes, they closed the paper by framing the random oracle hypothesis: that if two classes demonstrated a certain relationship with probability $1$ in the sense of drawing an oracle at random, then that relationship ought to hold in reality. \par 
As early as 1983 \cite{kurtz1982}, this hypothesis had been shown to be false in a variety of ways, 
most strikingly by Chang in 
1994 who demonstrated that $\bm{IP} \neq \bm{PSPACE}$ with probability $1$\cite{CHANG199424}, 
despite Shamir's result just two years earlier proving that $\bm{IP} = \bm{PSPACE}$ 
unrelativized \cite{IPequalsPSPACE}. In their original paper, Bennet and Gill anticipated that 
their hypothesis was likely false, and that the condition 
might have to be strengthened. They wrote:   
	\begin{quote}
		 We believe that this hypothesis, \textit{or a similar but stronger one}, 
captures a basic intuition of the pseudorandomness of nature from which many apparently 
true complexity results follow.[Our emphasis]
	\end{quote}

We propose an alternative hypothesis, involving a different notion of ``largeness'' 
which seems to work in the cases we consider in the sense that if two classes are 
separated relative to a large set of oracles, then the unrelativized classes are distinct. 
Furthermore, the set of oracles relative to which the complexity classes we consider are 
separated is not small. We note that other notions or largeness have also been considered 
by various authors. For example, it was shown (see for example \cite{KM}) that 
these separating sets of oracles were large in the category sense, that is, they are comeager in the 
standard topology on the Cantor space $2^\omega$. The notion we consider is the notion of 
comeagerness with respect to the Ellentuck topology (defined in \S\ref{section:Ramsey}) on $2^\omega$, identified with 
$\sP(\omega)$ or equivalently $[\omega]^\omega$, the set of increasing functions from $\omega$ to $\omega$
(we ignore finite sets here, as they correspond to trivial oracles). 
The Ellentuck topology is important and arises naturally in 
logic and set theory, and been extensively studied. It is closely related to a forcing notion 
called Mathias forcing, and this provides a sometimes convenient alternate point of view. 
The Ellentuck topology, or Mathias forcing, naturally arises in studying infinitary partition properties 
on $\omega$. The classical Ramsey theorem says that $\omega$ satisfies the finite exponent 
partition property, that is $\omega \to (\omega)^{k}_\ell$ for any $k,\ell \in \omega$, in 
the Erd\"os-Rado notation (see \S\ref{section:Ramsey} for a more complete statement).  
Assuming the axiom of choice, $\ac$, $\omega$ cannot satisfy an infinite exponent partition 
relation, however it can hold for certain classes of definable sets. The Galvin-Prikry theorem 
asserts that for Borel subsets $A\subseteq [\omega]^\omega$, the infinite exponent partition 
property holds (again, see \S\ref{section:Ramsey} for complete statements). This was extended 
by Silver to all analytic sets ($\bS^1_1$) sets. Mathias introduced a forcing notion
which could give these results, and Ellentuck reformulated this a topological notion, the 
Ellentuck topology. Sets $A\subseteq [\omega]^\omega$ for which a strong form of the 
partition property holds (the completely Ramsey sets, see \S\ref{section:Ramsey})
correspond to sets which are topologically regular, that is have the Baire property, with respect to
this topology. The notions of measure, category, and largeness with respect to category in the 
Ellentuck topology, which we call {\em Ramsey large}, are thus natural notions to use to measure 
the size of sets of oracles. One of the goals of this paper is to investigate the sets of oracles providing
separations between the various computational complexity classes with respect to the notion 
of Ramsey largeness.

In Theorem~\ref{Ohomo} and Corollary~\ref{cor:Ohomo} we show that the set
$\bm{O}$ of oracles separating $\bm{P}$ and $\bm{NP}$, and the set $\bm{O'}$
of oracles relative to which $\np \neq \cnp$ are both Ramsey positive 
(that is, they are not Ramsey small). This result agrees with the measure and category 
versions, however unlike those cases we are not able to show these sets are actually Ramsey large. 
Moreover, in Theorem~\ref{orl} we show that if $\bm{O}$ is Ramsey large then 
$\bm{P}\neq \np$. Thus, the version of the random oracle hypothesis using 
Ramsey largeness instead of measure holds for the class $\bm{O}$. 
In Theorem~\ref{Qhomo} we consider quantum computational complexity classes, and show 
that the oracles relative to which $\np \nsubseteq \bm{BQP}$ is also 
Ramsey positive. In Theorem~\ref{qrl} we obtain the corresponding version of the 
random oracle hypothesis, that is, we show that if this set is Ramsey large,
then  $\np \nsubseteq \bm{BQP}$. In Theorem~\ref{Shomo} we show that the situation for 
the classes $\bm{PSPACE}$ and $\bm{PH}$ is somewhat different, Namely, we show that
if the set of oracles separating these two classes in Ramsey positive, then 
$\bm{PSPACE}\neq \bm{PH}$. It thus remains an open question whether this set
is Ramsey positive, however this result shows that the Ramsey version of the random oracle hypothesis 
holds for these classes as well.  
We conclude the section by discussing what this survey reveals about the ``Ramsey oracle hypothesis''
being true more generally.


A second goal of the paper is consider the descriptive complexity of the various sets of oracles
separating these computational complexity classes. Here the notions of descriptive set theory provide 
a standard way of calibrating the complexity of a set of reals (oracles). 
Recall that in any topological space $X$ the collection of Borel sets is defined as the 
smallest $\sigma$-algebra containing the open sets. This is stratified into a hierarchy, the Borel 
hierarchy, as follows. We let $\bS^0_1$ denote the open sets, $\bP^0_1$ the closed sets, and 
$\bD^0_1=\bS^0_1\cap \bP^0_1$ the clopen sets. In general, for $\alpha<\omega_1$
a countable ordinal we let $\bS^0_\alpha$ be the sets $A$ which can be written as a countable
union $A=\bigcup_{n \in \omega} A_n$ where each $A_n \in \bP^0_{\alpha_n}$ for some $\alpha_n<\alpha$.
Similarly, $A\in \bP^0_\alpha$ if $A=\bigcap_{n \in \omega} A_n$ where each $A_n\in \bS^0_{\alpha_n}$
for some $\alpha_n <\alpha$. Also, we let $\bD^0_\alpha=\bS^0_\alpha \cap \bP^0_\alpha$. 
It is a standard fact that in any uncountable Polish space (completely metrizable, separable space)
the levels of this hierarchy do not collapse, that is $\bS^0_\alpha \neq \bD^0_\alpha$ for all $\alpha$. 
These levels of the Borel hierarchy thus give a way to measure the exact complexity of a set. 
We say a set $A$ is $\bS^0_\alpha$-hard if $A \notin \bP^0_\alpha$, which says that $A$ 
is at least a complicated as sets at the $\bS^0_\alpha$ level. We likewise say $A$ 
is $\bP^0_\alpha$-hard if $A\notin \bS^0_\alpha$. We say the set $A$ is $\bS^0_\alpha$-complete 
if $A \in \bS^0_\alpha$ and is $\bS^0_\alpha$, and likewise define $A$ being $\bP^0_\alpha$-complete. 
Being $\bS^0_\alpha$-complete says that the set is at the exact complexity of $\bS^0_\alpha$, that is,
it is $\bS^0_\alpha$ but not any simpler (not $\bD^0_\alpha$). Similarly for 
being $\bP^0_\alpha$-complete. Computing  upper-bounds for the complexity of a set is typically
much easier than proving the lower-bounds.

In Theorems~\ref{ocomu} and \ref{Ohardness} we show that the set $\bm{O}$ of oracles separating $\bm{P}$
and $\np$ is $\bP^0_2$-complete. We similarly show that $\bm{O'}$, the set of oracles 
separating $\np$ and $\cnp$ is $\bP^0_2$-complete. In Theorems~\ref{Qub} and \ref{Qhard}
we show that the set $\bm{Q}$ of oracles relative to which $\np \nsubseteq \bqp$
is also $\bP^0_2$-complete. Finally, in Theorems~\ref{Sub} and \ref{Shard} we show that the set $\bm{S}$
of oracles separating $\ps$ and $\ph$ is $\bP^0_2$-complete. 

For the rest of this section we fix some notation we will use for the rest of the paper. 
We let $\omega=\N=\{0,1,2,\dots\}$ denote the natural numbers. The Baire space $\omega^\omega$ is the 
space of functions from $\omega$ to $\omega$ with the product of the discrete topologies on $\omega$. 
$[\omega]^\omega$ is the set of increasing function from $\omega$ to $\omega$. $2^\omega$
denotes the Cantor space of functions from $\omega$ to $2=\{ 0,1\}$ with the product of the discrete 
topologies. Both $\ww$ and $2^\omega$ are Polish spaces, with $2^\omega$ being compact. 
We identify $2^\omega$ with $\sP(\omega)$, the power set of $\omega$ by considering 
characteristic functions. We also identify $[\omega]^\omega$ with the set of all infinite 
subsets of $\omega$ via their enumerating functions.


We fix for the rest of the paper an alphabet which we take to be the standard two-symbol 
alphabet $2=\{0,1\}$ unless otherwise specified. By a {\em string} we mean a finite sequence 
from the alphabet, that is, an element of $2^{<\omega}$. By a {\em language} we mean a subset
of strings. We fix a reasonable effective enumeration of the strings, which we 
write as $s_0,s_1,\dots$. This way, the set of strings is identified with $\omega$, and 
a language is identified with an element of $2^\omega$. 
An {\em oracle} refers to a particular language, that is, a specific subset of strings. 
Thus, oracles can also be identified with elements of $2^\omega$. 
As in the previous paragraph, we may also 
identify infinite languages or oracles with elements of $[\omega]^\omega$. 
In this way, natural notions such as measure 
or category on $2^\omega$ apply to sets of oracles. Similarly the notion of Ramsey large 
on $[\omega]^\omega$ carries over to sets of infinite oracles. 
The exclusion of finite language or oracles in the $[\omega]^\omega$ case 
has no implications of significance regarding our results. 
Monotonicity is a fundamental property of any reasonable notion of measure. 
If a class of languages turns out to be ``large,'' in any sense, then it should and will 
remain large when we restrict to infinite oracles. 
Thus, throughout the paper we make no distinction between complexity classes and their 
slightly thinned out counterparts in Ellentuck space.

We introduce the classes of oracles we will be primarily concerned with in this paper.
Recall that for a complexity class such as $\bm{P}$, $\np$, etc., the relativized 
class $\bm{P}^A$, $\np^A$ are are sets of languages that are in the class defined using $A$ as an 
oracle. We will use the following notation for the sets of oracles providing separations 
of various classes.

\begin{align}
	\bm{O} = \{A: \bm{P}^A \neq \bm{NP}^A\}
\end{align}
\begin{align}
	\bm{O'} = \{A: \bm{NP}^A \neq \cnp^{A}\}
\end{align}
\begin{align}
	\bm{S} = \{A: \bm{PH}^A \neq \bm{PSPACE}^A\}
\end{align}
\begin{align}
	\bm{Q} = \{A: \bm{NP}^A \nsubseteq \bm{BQP}^A\}
\end{align}

Before continuing to these results, we fix some common notations and conventions for later reference. 
If $M$ is a Turing machine presumed to halt on all inputs, we will write $L(M)$ to denote the language 
decided by that machine. For a string $x$, let $|x|$ denote it's length. If $\sigma$ is a symbol over some alphabet, 
we will write $\sigma^n$ to denote the string that is simply $\sigma$ repeated $n$ times. 
We will for the rest of the paper 
fix a computable enumeration over the set of all deterministic Turing machines $\{M_i\}$, and 
another for all nondeterministic Turing machines $\{N_i\}$. For both enumerations, 
we assume that every machine appears infinitely often (perhaps with padding). 
We will commonly write TM as an abbreviation for Turing machine, and NTM as an abbreviation 
for nondeterministic Turing machine. 


\section{Descriptive Complexity}\label{section:descriptiveComplexity}
It turns out that most sets of oracles separating two different complexity classes are easily 
shown to be $\Pi^0_2$ sets--the following argument, applied first to $\bm{O}$, 
could easily be applied to many others, as we will see. For the hardness 
results in this section  we will need to make use of a language powerful 
enough to force equality between the classes we are considering, when thought of as an oracle. 
Thus for the record we state the following lemma, whose proof can be found in any complexity theory textbook.

\begin{lemma}\label{squeeze}
Let $E$ be any $\bm{EXP}$-complete language. Then 
\[ \bm{P}^E = \bm{NP}^E = \bm{BQP}^E = \bm{PH}^E = \bm{PSPACE}^E = \bm{EXP} \]
\end{lemma}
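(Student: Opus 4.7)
The plan is a standard sandwich argument: show that every class in the chain contains $\bm{EXP}$ and is contained in $\bm{EXP}$, so that all six are forced to coincide with $\bm{EXP}$.

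For the lower bound, I would first establish $\bm{EXP}\subseteq \bm{P}^E$. Since $E$ is $\bm{EXP}$-complete under the standard (polynomial-time many-one) reducibility, any language $L\in \bm{EXP}$ admits a polynomial-time reduction $f$ with $x\in L \iff f(x)\in E$. A polynomial-time oracle machine that computes $f(x)$ and then queries the oracle decides $L$, so $L\in \bm{P}^E$. Because $\bm{P}^E\subseteq \bm{NP}^E\subseteq \bm{PH}^E\subseteq \bm{PSPACE}^E$ and $\bm{P}^E\subseteq \bm{BQP}^E\subseteq \bm{PSPACE}^E$ are all immediate from the unrelativized inclusions (and these inclusions are known to relativize), the lower bound $\bm{EXP}\subseteq C^E$ holds for each class $C$ in the list.

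For the upper bound, I would verify $\bm{PSPACE}^E\subseteq \bm{EXP}$, from which the same containment follows for every smaller class in the chain. The key observation is that a polynomial-space machine with oracle access to $E$ has at most $2^{q(n)}$ distinguishable configurations on an input of length $n$, so the computation can be simulated by a deterministic exponential-time machine that tracks the configuration graph. Each oracle query has length at most $q(n)$ polynomial in $n$, and because $E\in \bm{EXP}$ each query is answerable in time $2^{q(n)^c}$, still exponential in $n$. Summing over the at most $2^{q(n)}$ steps keeps the total running time in $\bm{EXP}$.

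Combining the two directions closes the loop $\bm{EXP}\subseteq \bm{P}^E\subseteq \bm{NP}^E, \bm{BQP}^E\subseteq \bm{PH}^E\subseteq \bm{PSPACE}^E\subseteq \bm{EXP}$, forcing equality throughout. There is no genuine obstacle here; the only place where one must be a little careful is in confirming that the inclusion $\bm{BQP}\subseteq \bm{PSPACE}$ relativizes to arbitrary oracles (it does, via the standard Feynman-path simulation), so that $\bm{BQP}^E$ can be slotted into the sandwich even though the relationship of $\bm{BQP}$ to $\bm{PH}$ is not known unrelativized.
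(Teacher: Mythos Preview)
Your sandwich argument is correct and is precisely the standard textbook proof; the paper does not supply its own argument for this lemma, instead remarking that ``the proof can be found in any complexity theory textbook,'' so your write-up is exactly what the authors have in mind. The one notational wrinkle is the final displayed chain, where the comma makes it look as though you are asserting $\bm{BQP}^E\subseteq \bm{PH}^E$ directly; since you were careful earlier to route $\bm{BQP}^E$ through $\bm{PSPACE}^E$ rather than $\bm{PH}^E$, you might rewrite that line as two parallel chains meeting at $\bm{PSPACE}^E$ to avoid any ambiguity.
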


In the following subsections we determine the exact complexity of the sets of oracles separating the 
various pairs of classes we consider.

\subsection{P and NP}

First, we have the following upper-bound on the complexity of $\bm{O}$. A straightforward 
computation appears at first glance to give $\bm{O}\in \Sigma^0_3$. The following 
result shows it is actually $\Pi^0_2$. 

\begin{theorem} \label{ocomu}
$\bm{O} \in \Pi^0_2$. 
\end{theorem}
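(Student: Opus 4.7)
The plan is to bypass the naive $\Sigma^0_3$ estimate---``there exists an NP machine such that no P machine agrees with it''---by replacing the apparent outermost existential quantifier over $\np^A$ languages by a single fixed $\np^A$-complete language. Specifically, using the fixed enumeration $\{N_i\}$ of nondeterministic oracle Turing machines, I would consider the universal language
\[ K^A := \{(i,x,0^t) : N_i^A \text{ accepts } x \text{ within } t \text{ steps}\}. \]
The routine verifications are that $K^A \in \np^A$ (on input $(i,x,0^t)$ simulate $N_i^A$ for $t$ steps, which is polynomial in the input length) and that $K^A$ is $\np^A$-hard via a uniform polynomial-time reduction: any $L \in \np^A$ has the form $L(N_i^A)$ for some fixed index $i$ running in some fixed polynomial time $p$, and the map $x \mapsto (i, x, 0^{p(|x|)})$ makes no oracle queries of its own. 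Consequently $\bm{P}^A = \np^A$ if and only if $K^A \in \bm{P}^A$, so $A \in \bm{O}$ iff $K^A \notin \bm{P}^A$.

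Next I would unfold $K^A \notin \bm{P}^A$ quantifier by quantifier. Writing $M_{j,k}$ for $M_j$ clocked to halt after $n^k + k$ steps---so that $\{M_{j,k}\}_{j,k \in \omega}$ enumerates every polynomial-time deterministic oracle machine---we have
\[ A \in \bm{O} \ \iff\ \forall (j,k)\ \exists y\ \bigl[M_{j,k}^A \text{ disagrees with } K^A \text{ at } y\bigr]. \]
For each fixed $j, k$ and each fixed input $y = (i, x, 0^t)$, both events ``$M_{j,k}^A(y)$ accepts'' and ``$y \in K^A$'' depend on only finitely many bits of $A$: the clocks impose a finite bound on the number and length of oracle queries made by either machine. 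Hence the bracketed disagreement predicate is clopen in $A$, the inner ``$\exists y$'' is $\Sigma^0_1$ in $A$, and the outer ``$\forall (j,k)$'' places the whole condition in $\Pi^0_2$, as desired.

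The main piece of bookkeeping to get right is the equivalence $\bm{P}^A = \np^A \iff K^A \in \bm{P}^A$---specifically, that the polynomial-time reduction from an arbitrary $\np^A$ language to $K^A$ is \emph{oracle-uniform}, in the sense that the reduction itself makes no oracle queries, so that a polynomial-time oracle algorithm for $K^A$ really does yield polynomial-time oracle algorithms for every member of $\np^A$. This is standard relativization behavior, but it deserves a sentence of justification since it is the reason the outermost existential over $\np^A$ languages can be collapsed to a single witness. Everything else is routine quantifier counting once the universal $\np^A$-complete language has absorbed that existential.
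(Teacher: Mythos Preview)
Your proof is correct and follows essentially the same approach as the paper: both replace the naive outermost existential over $\np^A$ languages by a single universal $\np^A$-complete language (the paper constructs a universal NTM $U$ and uses $L(U^A)$, which is the same bounded-acceptance language as your $K^A$ up to encoding), and both then express ``$K^A \in \bm{P}^A$'' as a $\Sigma^0_2$ condition. The only cosmetic difference is that you externally clock the deterministic machines as $M_{j,k}$, whereas the paper folds the time-bound check into the inner recursive predicate; both devices make the innermost matrix clopen in $A$.
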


\begin{proof}
Define the following functions 
\begin{align}
\mathcal{N}_a(i,n,k) = \begin{cases}
							1 & \textrm{if $N_i(x_n)$  halts in acceptance in time less than $|x_n|^k$} \\
							0 & \textrm{otherwise} 
						 \end{cases}
	\end{align}
	\begin{align}
	\mathcal{N}_t(i,n,k) = \begin{cases}
							1 & \textrm{if $N_i(x_n)$  halts in time less than $|x_n|^k$} \\
							0 & \textrm{otherwise} 
						 \end{cases}
	\end{align}
	\begin{align}
	\mathcal{M}_a(j,n,k) = \begin{cases}
								1 & \textrm{if $M_j(x_n)$ halts in acceptance in time less than $|x_n|^k$} \\
								0 & \textrm{otherwise} 
							 \end{cases}
	\end{align}
	\begin{align}
	\mathcal{M}_t(j,n,k) = \begin{cases}
								1 & \textrm{if $M_j(x_n)$ halts in less than $|x_n|^k$ steps} \\
								0 & \textrm{otherwise} 
							 \end{cases}
\end{align}

$\mathcal{M}_a$ and $\mathcal{M}_t$ are both clearly recursive (computable) 
To be more formal about $\mathcal{N}_t$ and $\mathcal{N}_a$: what we mean by ``$N_i(n)$ halts in acceptance'' 
is that there exists an accepting path in the configuration graph of $N_i(n)$. 
What we mean by ``$N_j(n)$ halts in time less than $|n|^k$ steps'' is that all paths of 
uniquely occurring vertices in the configuration graph of $N_i(n)$ (i.e., all of those paths without loops) 
have length less than $|n|^k$. Since both functions only require checking a finite number of paths, 
both of these functions $\mathcal{N}_a$ and $\mathcal{N}_t$ are recursive. 
For all of these functions, denote $\mathcal{M}_t^A$ and so forth to be the same functions, 
but with machines relativized to $A$.

The naive way to describe an oracle being in $\bm{O}$ is the following:
\begin{equation} 	
 	\begin{split}
		 A \in \bm{O} \iff \exists i,k_1 \forall j,n_1 [ \mathcal{N}^A_t(i,n_1,k_1) & \wedge ( ( \exists k_2 \forall n_2 \mathcal{M}^A_t(j,n_2,k_2) ) \\ & \Rightarrow \exists n_3 (\mathcal{N}^A_a(i,n_3,k_1) \neq \mathcal{M}^A_a(j,n_3,k_2)) ) ]
	\end{split}	 
\end{equation}
This sentence makes the claim that there exists some relativized nondeterministic Turing machine $N_i^A$ which halts in time polynomially bounded (with degree $k_1$) by its input lengths for all inputs 
($n_1$ being the input), meaning that it decides some language $L_1 \in \textbf{NP}^A$, 
and which has the property that if $M_j^A$ is any polynomial time bounded Turing machine (of degree $k_2$) 
(meaning it decides some language $L_2 \in \textbf{P}^A$), then there must exist an input ($n_3$) 
on which $N_i^A$ and $M_j^A$ disagree (meaning that $L_1 \neq L_2$).  Putting this in 
prenex normal form pulls out another existential quantifier, yielding a $\Sigma^0_3$ expression for $\bm{O}$.

We can do better than this, however. The trick is to talk about equality of classes 
in terms of the existence of a ``universal'' language which, were it in the weaker of the two classes, 
would render that class just as powerful. We define such a language in terms of a ``universal'' nondeterministic 
Turing machine $U$. The machine $U$ takes inputs of the form $x,p^n,m$, 
where $x$ is a string, $p$ is a special symbol which should never occur in $x$ or $m$, 
and $m$ codes an integer index for a nondeterministic Turing machine. On an input of this form, 
the machine $U$ simulates the $m^{th}$ nondeterministic machine on the input $x$, for $|x|+n$ many steps, 
and accepts if the $m^{th}$ machine accepts in this time, rejecting otherwise. Since there exists 
a universal nondeterministic Turing machine which operates in linear simulation overhead, with 
minor modification we can easily create a machine $U$ which works as described above and halts in polynomial time. 
Thus the language decided by $U$, $L(U)$, is clearly in $\bm{NP}^A$, as is it's relativized 
counterpart $L(U^A)$ in $\bm{NP}^A$, for any fixed oracle $A$.

Suppose that for some oracle $A$, $L(U^A)$ were in $\bm{P}^A$. Let $M^A_U$ be the poly-time machine 
which decides it. It is simple enough to check that $\bm{P}^A = \bm{NP}^A$: Let $N^A$ be an arbitrary 
relativized nondeterministic Turing machine which halts in time $n^k$ for some $k \in  \omega$, 
and let $m$ be the integer index of this machine. Fix an input $x$. To decide if $x \in L(N^A)$, 
we can simply simulate the machine $M^A_U$ on the input $x;p^{|x|^k-|x|};m$, and accept or reject accordingly. 
Clearly this process can be accomplished in polynomial time, and thus we have shown that $L(N^A) \in \bm{P}^A$. 
Thus we have that $L(U^A) \in \bm{P}^A$ iff $\bm{P}^A = \bm{NP}^A$. 
Let $u \in \omega$ be the index of the nondeterministic machine $U$, and let $l \in \omega$ 
be the polynomial bound on that machine. Stated in terms of the functions we have defined earlier, we have

\begin{align} \label{Pi2}
A \in \bm{O}^c \iff \exists i,k \forall n [(\mathcal{M}^A_t(i,n,k)=1) 
\wedge (\mathcal{N}^A_a(u,n,l)=1 \iff \mathcal{M}^A_a(i,n,k)=1)] 
\end{align}

Thus $\bm{O}^c \in \Sigma^0_2$, and so $\bm{O} \in \Pi^0_2$.
\end{proof}

Since the claim that ``there exists a machine which decides this language'' is generally always going 
to be a $\Sigma^0_2$ statement, it's clear that such an argument should work 
for all complexity classes in which it's known that a universal machine exists. 
In particular, an identical argument confirms 
that $\bm{O'} \in \Pi^0_2$. The case of $\bm{Q}$ will also show itself to be a nearly 
identical argument, we give the details in section \ref{descriptiveQ}. The case of $\bm{S}$ 
requires a bit more discussion, but nonetheless we will show it to be $\Pi^0_2$ 
as well in section \ref{descriptiveS}.  Since the lightface classes are contained 
in their boldfaced counterparts, we have the following weaker corollary which will be useful to our Ramsey results:

\begin{corollary}
$\bm{O}$ and $\bm{O'}$ are Borel.
\end{corollary}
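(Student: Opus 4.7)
The corollary follows almost immediately from what has just been established, and my plan is to separate the two required containments and handle them in parallel. For $\bm{O}$, Theorem~\ref{ocomu} has already placed it in the lightface class $\Pi^0_2$, and since any lightface arithmetic set is contained in its boldface counterpart $\bP^0_2$, this at once gives $\bm{O}\in\bP^0_2$, hence Borel.

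For $\bm{O'}$ I would mimic the proof of Theorem~\ref{ocomu} nearly verbatim, swapping the role of a deterministic polynomial-time machine witnessing $L(U^A)\in\bm{P}^A$ for a nondeterministic polynomial-time machine witnessing that the complement of $L(U^A)$ lies in $\np^A$. Concretely, fix the same universal nondeterministic polynomial-time machine $U$ with index $u$ and polynomial bound $l$ used in the proof of Theorem~\ref{ocomu}. Since $\np$ is closed under many-one reductions, a standard argument shows that $L(U^A)$ is $\np^A$-complete, so $\np^A=\cnp^A$ if and only if the complement of $L(U^A)$ is decided by some nondeterministic polynomial-time machine $N_i^A$. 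Using the functions $\mathcal{N}_a^A$ and $\mathcal{N}_t^A$ from the proof of Theorem~\ref{ocomu}, this reads
\begin{equation*}
A\in(\bm{O'})^c \iff \exists i,k\,\forall n\,\bigl[\mathcal{N}_t^A(i,n,k)=1\,\wedge\,(\mathcal{N}_a^A(u,n,l)=1 \iff \mathcal{N}_a^A(i,n,k)=0)\bigr].
\end{equation*}
The matrix on the right is recursive in $A$, so this displays $(\bm{O'})^c$ as $\Sigma^0_2$ and hence $\bm{O'}$ as $\Pi^0_2\subseteq\bP^0_2$.

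Combining the two gives that $\bm{O}$ and $\bm{O'}$ are each $\bP^0_2$ and therefore Borel, finishing the proof. I do not anticipate any real obstacle here; the only point worth double-checking is that the universal-machine trick of Theorem~\ref{ocomu} genuinely adapts to the $\np$ versus $\cnp$ setting, but it does, because $\np^A\subseteq\cnp^A$ already forces equality by closure of $\cnp$ under complementation, so it is enough to witness membership of the complement of a single $\np^A$-complete language in $\np^A$.
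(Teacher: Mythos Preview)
Your proposal is correct and follows essentially the same approach as the paper. The paper dispatches the corollary in the text preceding it: it asserts that ``an identical argument confirms that $\bm{O'}\in\Pi^0_2$'' and then observes that lightface classes sit inside their boldface counterparts; you have simply spelled out the details of that identical argument (replacing the deterministic witness for $L(U^A)\in\bm{P}^A$ by a nondeterministic witness for the complement of $L(U^A)$ lying in $\np^A$), which is exactly what the paper intends.
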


We next show that $\bm{O}$ and $\bm{O'}$ are $\bP^0_2$-hard, and thus are $\bP^0_2$-complete.

\begin{theorem}\label{Ohardness}
$\bm{O}$ is $\bP^0_2$ hard. 
\end{theorem}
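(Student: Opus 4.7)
The plan is to produce a continuous reduction from a standard $\bP^0_2$-complete set, namely $P = \{x \in 2^\omega : \exists^\infty n\ x(n) = 1\}$ (the set of elements with infinitely many $1$'s), into $\bm{O}$. That is, I would construct a continuous map $x \mapsto A_x$ from $2^\omega$ to $2^\omega$ such that $A_x \in \bm{O}$ if and only if $x \in P$. Since $P$ is a standard $\bP^0_2$-complete set, this suffices.

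The construction blends an $\bm{EXP}$-complete coding with a Baker--Gill--Solovay style diagonalization driven by the $1$'s of $x$. Fix an $\bm{EXP}$-complete language $E$ and a computable partition of the strings into two halves; on the first half, $A_x$ always codes $E$ via a fixed bijection, while the second half is set by the diagonalization below. Fix a rapidly growing sequence of lengths $\ell_0 < \ell_1 < \cdots$ and an enumeration $P_0, P_1, \ldots$ of all pairs $(M, k)$ where $M$ is a deterministic oracle Turing machine and $k$ a polynomial degree. Given $x$, let $s_0 < s_1 < \cdots$ enumerate $\{s : x(s) = 1\}$. At each stage $s_i$ that exists, run a BGS diagonalization against $P_i = (M, k)$: simulate $M^{A_x}(1^{\ell_{s_i}})$ for $\ell_{s_i}^k$ steps using the portion of $A_x$ already committed; if $M$ accepts, commit no second-half strings of length $\ell_{s_i}$; if $M$ rejects, commit one second-half string of length $\ell_{s_i}$ not queried during the simulation. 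For all other stages and for lengths outside $\{\ell_s\}$, the second half of $A_x$ remains empty. Continuity of $x \mapsto A_x$ is immediate, since the value at any fixed string depends on only finitely many bits of $x$.

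If $x$ has only finitely many $1$'s, then $A_x$ differs from the pure $E$-encoding by a finite set, hence $A_x$ is polynomial-time Turing equivalent to $E$. Lemma~\ref{squeeze} then gives $\bm{P}^{A_x} = \bm{P}^E = \bm{NP}^E = \bm{NP}^{A_x} = \bm{EXP}$, so $A_x \notin \bm{O}$. If instead $x$ has infinitely many $1$'s, consider the language $L_{A_x} = \{1^n : A_x \text{ contains some second-half string of length } n\}$, which is plainly in $\bm{NP}^{A_x}$. Any deterministic poly-time oracle machine $M$ of bound $n^k$ that purported to decide $L_{A_x}$ relative to $A_x$ would correspond to some pair $P_i$; but at stage $s_i$ we explicitly arranged that $M^{A_x}(1^{\ell_{s_i}})$ disagrees with $L_{A_x}(1^{\ell_{s_i}})$. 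Therefore $L_{A_x} \in \bm{NP}^{A_x} \setminus \bm{P}^{A_x}$, and $A_x \in \bm{O}$.

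The main technical obstacle is ensuring that the simulation at stage $s_i$ is consistent with the final $A_x$. First-half queries always return the predetermined $E$-values; second-half queries at lengths $\ell_{s'}$ for $s' < s_i$ are already committed by earlier stages; and by choosing the growth of $\ell_s$ to dominate the polynomial bounds of all pairs $P_j$ that can be handled by stage $s$ (so $\ell_{s+1} > \ell_s^{k}$ for every such bound), the simulation cannot reach any length $\ell_{s'}$ with $s' > s_i$. The existence of an unqueried witness string in the ``reject'' branch is the classical counting bound $2^{\ell_{s_i} - O(1)} \gg \ell_{s_i}^k$. A nearly identical argument, replacing the deterministic decider in the definition of $L_{A_x}$ with a co-nondeterministic machine, would then give the same hardness statement for $\bm{O}'$.
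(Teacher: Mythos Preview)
Your proof is correct and follows essentially the same strategy as the paper: a continuous reduction from the infinitely-many-ones set, using an $\bm{EXP}$-complete coding so that finitely many $1$'s collapse the classes via Lemma~\ref{squeeze}, together with a delayed Baker--Gill--Solovay diagonalization driven by the $1$'s of $x$ to separate them otherwise. The paper's version differs only in packaging: it interleaves the $E$-coding and the diagonalization by length according to $b(n)$ rather than splitting strings into two halves, it uses a uniform $n^{\log n}$ time bound instead of enumerating $(M,k)$ pairs, and it handles queries to not-yet-committed strings via an explicit exception set $X$ rather than your growth condition on the $\ell_s$.
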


\begin{proof}

We use the set 	
\begin{equation} \label{infinitelyOne}
H = \{b \in 2^\omega \colon b(n)=1 \textrm{ for infinitely many } n\},
\end{equation}
which is well-known 
(and easy to see) to be $\bP^0_2$-complete (see \cite{KechrisBook}).
To show that $\bm{O}$ is $\bP^0_2$ hard it suffices to define a {\em reduction} 
$f$ of $H$ to $\bm{O}$. That is, $f\colon 2^\omega \to 2^\omega$ is a continuous function 
such that $b \in H$ iff $f(b) \in \bm{O}$. 
Note that despite having the same domain and range, we will be viewing elements of the domain as infinite 
binary sequences, and elements of the codomain as sets of strings over a fixed alphabet.

Thus given a binary sequence $b$, we want to construct an oracle $A$ such that $\bm{P}^A \neq \bm{NP}^A$ 
iff $b$ is $1$ infinitely often. Let $E$ be any $\bm{EXP}$-complete language. The strategy 
is to perform a ``delayed'' version of Solovay's original diagonalization out of 
$\bm{P}$ \cite{solovayOriginal}, in which the $n^{th}$ stage of that construction only happens 
when we encounter the $n^{th}$ $1$ of the sequence. Whenever the $n^{th}$ bit is a $0$, we'll 
add all strings from $E$ of length $i$ to the oracle, so that finitely many $1$'s corresponds 
to an oracle which is ``as powerful'' as $E$ up to a finite number of answers which can be hard coded. 
Therefore by lemma \ref{squeeze}, we will be left with an oracle $A$ such that $\bm{P}^A = \bm{NP}^A$. 
With an infinite number of $1$'s, we will likely still be left with a significant amount of strings of $E$, 
but we will have nonetheless fully diagonalized out of $\bm{P}^A$.

We now describe the construction explicitly. Fix a bit string $b$. For any $n$, let $d(n)$ 
denote the number of $1$'s that have been encountered in $b$ by the $n^{th}$ digit (inclusive).
We construct our oracle $A$ in stages, corresponding to the bits of $b$.
Let $A_{n-1}$ be what we have constructed of the oracle after stage $n-1$. 
We commit that we will never add strings of length $i < n$ at stage $n$.

First, if $b(n)=0$, then we simply add all strings from $E$ of length $n$, 
and move to the next stage, i.e., $A_n$ will equal $A_{n-1}$ unioned with all strings of length $n$ from $E$. 
Second, if $b(n)=1$, then we run $d(n)^{th}$ Turing machine relativized to 
$A_{n-1}$ for $n^{\log(n)}$ steps on the input $1^n$. Suppose during this computation that the 
machine queries a string $x$. If $|x| < n$, then the decision of whether $x \in A_{n-1}$ will 
have already been settled. Suppose $|x| \geq n$. If this is the case and $b(|x|)=1$, 
then we make note of this by adding $x$ to an ongoing list of exceptions $X$ (these 
are strings which we are not allowed to add at a later stage so as to not alter the computation).
In running this computation, we proceed as if $x \notin A$, which will be the case if we follow our
commitments regarding the set $X$. 
The goal of the stages corresponding to $b(n)=1$ is to make it so that 
no polynomial time deterministic Turing machine can decide the language

\begin{equation}\label{solovayLang}
L_A = \{1^n: A \textrm{ contains a string $x$ of length $n$} \} 
\end{equation}

\noindent
We seek to do this by adding a single string $x$ of length $n$ to $A_{n-1}$ in the case of the 
machine rejecting, and in the case of the machine not rejecting we add nothing, so as to ensure 
that the machine makes an error on $1^n$. If the computation 
$M^{A_{n-1}}_{d(n)}(1^n)$ does not halt in rejection in at most  $n^{\log(n)}$ steps then 
we add no new strings at stage $n$, that is, we set $A_n=A_{n-1}$. Suppose 
$M^{A_{n-1}}_{d(n)}(1^n)$ does halt in rejection within this number of steps. 
In the worst case scenario, every machine $M^{A_0}_1,\ldots,M^{A_{n-1}}_{d(n)}$ will have queried a 
different string of length $n$ at every step of it's computation. In this case the set of strings $X$ 
of length $n$ which we've committed to not adding to our oracle is $\sum_{j=1}^n j^{\log(j)}$, 
which is less than $2^n$ for all $n \in \omega$. Thus, there will be a string $x$ of length $n$
which is not in the set $X$. We let $A_n=A_{n-1}\cup \{ x\}$, where $x \notin X$ is the 
lexicographically least  string not in $X$ such that $|x| = n$. 
This completes the construction of the $A_n$. We let $A=\bigcup_n A_n$. 
We let $f(b)$ be the oracle $A$ just constructed. We have thus defined the map 
$f \colon 2^\omega \to 2^\omega$.

To see this works, first suppose $b(n)=1$ for only finitely many $n$. 
Then $A = (E \sm P) \cup Q$ for some finite sets of strings $P$ and $Q$. 
Clearly this is still $\bm{EXP}$ complete, and so $\bm{P}^A = \bm{NP}^A$. 
On the other hand, suppose that $b(n)=1$ for infinitely many $n$. Suppose by way of contradiction 
that there is a polynomial time relativized deterministic Turing machine $M^A$ which decides $L_A$. 
Let $m$ be an index for it in our enumeration and $k$ be the degree of it's polynomial bound. 
Let $j \in \omega$ be large enough that for all $i \geq j$, $i^k < i^{\log(i)}$. By our assumptions 
there exists an $l \geq j$ such that $b(l)=1$ and $M^A_m = M^A_{d(l)}$. 
Thus at stage $l$ of the construction, we observed the computation of $M^A_m$ on the input $1^l$ 
for a number of steps sufficient for it to halt and make a decision. Suppose it halted in rejection. 
Then at stage $l$ we added a string $x$ of length $l$ to $A=f(b)$. So $1^l \in L_A$, a contradiction since $M_m$ 
is supposedly deciding $L_A$. Suppose it halts in acceptance. Then by construction $A$
has no strings of length $l$ in it.  Thus $1^l \notin L_A$, and our machine $M_m$ is again in error. 
We have shown that no polynomial time machine can decide $L_A$.

On the other hand, $L_A \in \bm{NP}^A$ regardless of $A$, because we can simply have a 
nondeterministic machine guess a string of the proper length and check with the oracle to 
confirm membership, in $O(1)$ steps. Thus we have  established that $\bm{P}^A \neq \bm{NP}^A$ iff $b(n)=1$ 
for infinitely many $n$. That is,  $b \in H$ iff $f(b)\in \bm{O}$. 

It remains to show that this function is continuous. 
To show this it suffices to show that for any $b \in 2^\omega$, there exists a function 
$g\colon \omega \to \omega$ such that, for any string $x$, membership of $x$ in $f(b)$ depends on 
at most $g(|x|)$ bits of $b$. Fix $b$ and suppose $x$ is a string of length $n$.
We can decide if $x \in f(b)$ by running through the first $n$ stages of the construction of $f(b)$. 
If $b(n) = 0$, then $x \in f(b)$ iff $x \in E$, meaning that only the first $n$ bits of $b$ are 
required to determine membership. If $b(n) = 1$, then we would add $x$ to $f(b)$ 
iff the machine $M_{d(n)}^{f(b)}$ halts in rejection on $1^n$, and $x$ is the smallest string 
of length $n$ that has not been queried at any stage $i \leq n$ such that $b(i)=1$. It will likely 
be the case that this computation will need to query strings of length greater than $n$. 
However, there are finitely many such strings which are queried. Let $g(n)$
be the maximum length of these queried strings for this and the previous computations.
Then membership of $x$ in $A$ depends 
only on the first $g(n)$ digits of $b$.
\end{proof}

\begin{corollary}
$\bm{O}$ is $\bP^0_2$-complete.
\end{corollary}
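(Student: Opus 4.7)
The plan is to observe that this corollary is an immediate consequence of the two preceding theorems combined with the definition of $\bP^0_2$-completeness given in the introduction. Recall that a set $A$ was defined to be $\bP^0_2$-complete precisely when $A \in \bP^0_2$ and $A$ is $\bP^0_2$-hard (i.e., $A \notin \bS^0_2$). Both conjuncts have already been established for $\bm{O}$.

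First, I would invoke Theorem~\ref{ocomu}, which gives the upper bound $\bm{O} \in \bP^0_2$ by exhibiting the $\Sigma^0_2$ description (\ref{Pi2}) of the complement $\bm{O}^c$ via the universal nondeterministic machine $U$. Second, I would invoke Theorem~\ref{Ohardness}, which gives the matching lower bound: the continuous reduction $f\colon 2^\omega \to 2^\omega$ constructed via the delayed Solovay diagonalization witnesses that $H \leq_W \bm{O}$, where $H$ is the classical $\bP^0_2$-complete set of binary sequences containing infinitely many $1$s. Since $\bP^0_2$-hardness is preserved under continuous (Wadge) reductions, and $H$ is itself $\bP^0_2$-hard, it follows that $\bm{O}$ is $\bP^0_2$-hard.

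Combining these two facts yields the completeness. There is essentially no obstacle here; the corollary is just the formal packaging of the upper-bound and hardness theorems into a single completeness statement. The only thing worth being careful about is ensuring that the notion of hardness used in Theorem~\ref{Ohardness} matches the definition in the introduction (namely $\bm{O}\notin \bS^0_2$), which it does since a continuous preimage of a set not in $\bS^0_2$ cannot itself be in $\bS^0_2$.
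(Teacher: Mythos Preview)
Your proposal is correct and matches the paper's approach exactly: the corollary is stated without proof in the paper, as it follows immediately from combining Theorem~\ref{ocomu} (the $\Pi^0_2$ upper bound) with Theorem~\ref{Ohardness} (the $\bP^0_2$-hardness via the continuous reduction from $H$).
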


\subsection{NP and BQP}\label{descriptiveQ}
It is clear that the argument of Theorem~\ref{Ohardness} 
can be potentially recycled for any pair of classes for which there 
is a diagonal process of constructing oracles separating them. Another example of this is the 
separation of $\bm{NP}$ from $\bm{BQP}$, and so we turn our attention to quantum Turing machines. 
The following results assume the definitions and conventions of Bernstein and Vazirani's oracle 
quantum Turing machine model from their landmark paper hammering out the technical details of 
these machines \cite{quantumComplexityTheory}, and uses their notation. For a comprehensive 
overview of quantum Turing machines, we refer the reader to that paper. In the interest of completeness, 
however, we supply a brief overview. Quantum Turing machines are a generalization 
of the deterministic Turing machines (with two-way infinite tape) in which the transition 
function takes symbol-state pairs not to particular actions, but rather to vectors in which 
an efficiently computable complex number is assigned to every possible action which a deterministic machine 
could take in that context (the set of possible complex numbers which the transition function 
assigns can in fact be assumed finite). Configurations are seen as orthonormal basis vectors in 
a countable dimensional vector space populated by finite linear combinations of these. 
The machine with it's modified transition function implicitly defines a transformation from 
these basis vectors to finite superpositions of them, called the \emph{time evolution operator}, 
and this can be extended to a full linear transformation on the space called the 
\emph{linear time evolution operator}.  The operator defined by the machine takes a fixed 
initial configuration to a finite superposition of possible configurations (one for each action, 
weighted according to the complex number given by the transition function), and 
further steps are seen as repeated application of the same operator. 
The squared magnitude of the complex number weighting a configuration 
of the superposition is seen as giving the probability that one will observe that configuration 
upon ``measurement'' of the tape. The actual set of quantum Turing machines requires restrictions 
to be consistent with actual physics. In particular, the QTMs which should be viewed as legitimate 
are those for which the linear time evolution operator is unitary. Bernstein and Vazirani derive 
in their paper a finite set of local conditions directly on the transition function which are 
necessary and sufficient for this to be the case. This means that we can recursively enumerate 
over the set of all QTMs just as easily we can over DTMs, NTMs, or PH expressions.

Halting configurations (QTMs are defined to have a single halting state) deserve special mention. 
The convention is to say that a QTM halts in $T$ steps if it after $T$ applications of the 
linear time evolution operator, the machine is left in a superposition such that all non-zero 
configurations are halting configurations, and furthermore no halting configuration appears 
in any superposition up to that many applications. Thus the question of how to interpret superpositions 
in which some configurations are halting, and others are not, is avoided. Input conventions, i.e., 
specifying the initial configuration for a desired input string $x$, remain identical to 
deterministic Turing machines. A QTM is said to operate in time $f(n)$ if it halts on all 
strings of length $n$ in time less than or equal to $f(n)$. A QTM is said to accept the string 
$x$ with probability $p$ in $T$ steps if it halts in $T$ steps from the appropriate initial 
configuration, and additionally if one measures the starting cell of the tape at this point, 
they will see a $1$ with probability $p$, and a $0$ with probability $1-p$ (we will assume that 
the alphabet for all QTMs include at the very least $0$, $1$, and a distinct blank symbol). 
Rejection is defined identically. An entire language $L$ is accepted by a QTM with probability $p$ 
if the machine accepts every string $x \in L$ with probability at least $p$, and also rejects 
every string $x \notin L$ with probability at least $p$. Finally the class $\bm{BQP}$ is defined 
to be the class of languages $L$ for which there exists a $QTM$ $M$, and an integer $k \in \omega$ 
such that $M$ operates in time $O(n^k)$ and accepts $L$ with probability $\frac{2}{3}$. 
Multitrack QTMs are also defined in the obvious way.

Turning to oracles, an oracle QTM has a special query track, and two special query states: 
a prequery state and postquery state. A query configuration is one in which the machine is in a 
prequery state, and the query tape has a single uninterrupted nonempty string (i.e., a single 
sequence of non-blank characters) of the form $x\conc i$, where $i$ is a $0$ or a $1$. 
Thinking of the oracle as a function outputting a $0$ or a $1$, application of the linear time 
evolution operator on a basis configuration of this form will always yield a configuration 
in which everything is unaltered except for the state, which becomes the postquery state, 
and the query tape, which will display $x$ immediately followed by $i \bigoplus f(x)$, 
where $\bigoplus$ denotes addition modulo $2$. This is important for keeping the transformation unitary,
although for our purposes we will always assume that $i$ is $0$, so that the direct answer 
to the query is simply printed next the string we were querying. It should be noted that queries 
are deterministic--a basis state corresponding to a query configuration yields a single 
postquery basis configuration with amplitude $1$. For an oracle $A$ and a QTM $M$, 
$M^A$ denotes the machine $M$ relativized to $A$ and $\bm{BQP}^A$ denotes the class of 
languages accepted with probability $\frac{2}{3}$ in polynomial time by a QTM relativized to $A$.

In accordance with the probabilistic convention for accepting and rejecting, simulation of one 
QTM by another, and simulation of arbitrary unitary operations by QTMs, are defined according to 
approximation rather than exactness.
The following lemma from \cite{strengthsWeaknesses} is valuable to both our results and this idea.

\begin{lemma}[Theorem~3.1 of \cite{strengthsWeaknesses}] \label{distDist}
If two unit length superpositions $\ket{\phi}$ and $\ket{\psi}$ are within Euclidean 
distance $\epsilon$ of each other (i.e. $||\phi-\psi|| \leq \epsilon$), then the maximum probability 
distance between the probability distributions resulting from measurement is at most $4\epsilon$. 
That is to say, given any observable event $E$, the probability that $E$ occurs upon measuring 
$\ket{\phi}$ will be no more than $4\epsilon$ from the probability that $E$ 
occurs upon measuring $\ket{\psi}$.
\end{lemma}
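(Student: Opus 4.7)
The plan is to reduce the statement about probability distributions to a statement about norms of projected vectors. Given an observable event $E$, any measurement of a unit superposition $\ket{\phi}$ assigns to $E$ the probability $\|P_E\ket{\phi}\|^2$, where $P_E$ is the orthogonal projection onto the subspace of basis configurations witnessing $E$. The goal thus becomes bounding
\[
\bigl|\,\|P_E\ket{\phi}\|^2 - \|P_E\ket{\psi}\|^2\,\bigr|
\]
by a linear function of $\|\ket{\phi}-\ket{\psi}\|$.

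First I would apply the factorization $a^2-b^2 = (a-b)(a+b)$ with $a=\|P_E\ket{\phi}\|$ and $b=\|P_E\ket{\psi}\|$. Since $\ket{\phi}$ and $\ket{\psi}$ are unit vectors and orthogonal projections are norm-contractive, both $a$ and $b$ lie in $[0,1]$, so $a+b\leq 2$. Second, I would control $|a-b|$ via the reverse triangle inequality:
\[
|a-b| \;=\; \bigl|\,\|P_E\ket{\phi}\| - \|P_E\ket{\psi}\|\,\bigr| \;\leq\; \|P_E(\ket{\phi}-\ket{\psi})\| \;\leq\; \|\ket{\phi}-\ket{\psi}\| \;\leq\; \epsilon,
\]
where the penultimate step again uses that $P_E$ is a contraction. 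Multiplying these bounds yields $|a^2-b^2|\leq 2\epsilon$, which is stronger than the claimed $4\epsilon$; the extra factor of two is simply slack, useful when one wishes to handle events described by a POVM rather than a single projector, or when one compares probabilities of a single outcome across two differing bases (each comparison then costs at most $2\epsilon$).

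The main obstacle, such as it is, lies not in the analysis but in setting up the measurement formalism cleanly enough that one can speak of ``the projector $P_E$'' for an arbitrary observable event; once one adopts the standard convention that an event corresponds to a union of orthonormal measurement outcomes, the projector is well-defined and the contraction argument above goes through verbatim. No structural features of quantum Turing machines enter the proof: the result is purely a geometric fact about unit vectors in a Hilbert space, and could equally well be stated for any inner product space whose unit ball carries a probability interpretation via the Born rule.
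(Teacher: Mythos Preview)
Your proof is correct. The paper does not supply its own proof of this lemma; it merely cites \cite{strengthsWeaknesses} (see the sentence ``See \cite{strengthsWeaknesses} for proofs of these results'' following Lemma~\ref{oracleAlter}), so there is no argument in the paper to compare against. Your projector-and-factorization approach is the standard one, and as you note it actually delivers the sharper bound $2\epsilon$; the paper only needs the stated $4\epsilon$ (indeed it uses the lemma with $\epsilon=\tfrac{1}{28}$ and a margin of $\tfrac{1}{7}$), so either constant suffices for all downstream applications.
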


By this lemma, two QTMs which are expected to have 
superpositions which are close in Euclidean distance, can be expected to be close in the 
distribution of measured outcomes. In their paper, Bernstein and Vazirani demonstrate that 
an arbitrary unitary transformations on $n$-qubit space can be approximated 
with exponential accuracy in polynomial time 
(weighted appropriately by the dimension $n$).

They also demonstrate the existence of a 
universal QTM, which can simulate arbitrary QTMs with any desired accuracy with slowdown 
polynomial in the runtime of the original machine and also polynomial in the inverse of the 
desired accuracy. Thus we can speak of simulating one QTM by another, or simulating an 
arbitrary unitary transformation. From these arguments 
it is also follows that there is a universal oracle Turing machine $M^A$ 
that can efficiently simulate all other QTMs relativized to $A$.

The ability to recursively enumerate over the collection of all QTMs, along with the existence of 
universal relativized QTMs for any oracle, is enough to import our argument from Equation~(\ref{Pi2}) 
into our current context. Redefine the context of $\mathcal{M}^A_a(j,n,k)$ and 
$\mathcal{M}^A_t(j,n,k)$ so that $M_j$ refers to the $j^{th}$ oracle QTM as opposed to the 
$j^{th}$ oracle DTM, then by these two results we can be sure that these new functions are 
still recursive (relative to a fixed oracle). Have $U$, and the integer $u$ be 
defined as in Equation~(\ref{Pi2}). It is clear that $\bm{NP}^A \subseteq \bm{BQP}^A$ 
iff $L(U^A) \in \bm{BQP}^A$.  We therefore have

\begin{align} 
A \in \bm{Q}^c \iff \exists i,k \, \forall n [(\mathcal{M}^A_t(i,n,k)=1) \wedge 
(\mathcal{N}^A_a(u,n,l)=1 \iff \mathcal{M}^A_a(i,n,k)=1)] 
\end{align}
Thus we have 

\begin{theorem}\label{Qub}
$\bm{Q}$ is $\Pi^0_2$ (and therefore also Borel). 
\end{theorem}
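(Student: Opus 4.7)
The plan is to transport the argument from Theorem~\ref{ocomu} essentially verbatim into the quantum setting; indeed, the displayed equivalence for $\bm{Q}^c$ is already on the page, and it remains only to verify that its constituent pieces do what is required.

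First I would confirm that the re-interpreted predicates $\mathcal{M}^A_t$ and $\mathcal{M}^A_a$, now indexing oracle QTMs rather than DTMs, are recursive relative to $A$. This uses the Bernstein--Vazirani restriction that transition amplitudes lie in an efficiently computable set, so that after $T = |x_n|^k$ applications of the linear time evolution operator the amplitude of every reachable configuration is a finite sum of finite products of these numbers, and each oracle query resolves deterministically via $\ket{x,i} \mapsto \ket{x,\, i \oplus A(x)}$. From these exactly computed amplitudes one reads off both whether the superposition is supported entirely on halting configurations (giving $\mathcal{M}^A_t$) and whether the measured acceptance probability meets the fixed $\frac{2}{3}$ threshold (giving $\mathcal{M}^A_a$).

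Next I would pin down the key equivalence $\np^A \subseteq \bqp^A \iff L(U^A) \in \bqp^A$ needed for the formula to capture $\bm{Q}^c$. The forward direction is immediate since $L(U^A) \in \np^A$ uniformly in $A$. The reverse direction is the padding trick used in Theorem~\ref{ocomu}: given any $N^A_m \in \np^A$ running in time $n^k$, a $\bqp^A$ decider for $L(U^A)$ can be invoked on the padded input $x;p^{|x|^k - |x|};m$ to decide $L(N^A_m)$ in polynomial time with the $\frac{2}{3}$ acceptance guarantee. Combining this with the recursive enumerability of oracle QTMs and the existence of the universal oracle QTM $U$ makes all the pieces of the formula well-defined.

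With these ingredients in place, the displayed equivalence exhibits $\bm{Q}^c$ as a $\Sigma^0_2$ relation in $A$, and hence $\bm{Q} \in \Pi^0_2$. The main subtlety, insofar as there is one, lies in the first step: one must ensure that the comparison of the exactly computed acceptance probability to the threshold $\frac{2}{3}$ is genuinely decidable (not merely approximable), which is precisely what the Bernstein--Vazirani convention on amplitudes guarantees. Once that is granted, the complexity computation is identical to Theorem~\ref{ocomu}, and the Borel conclusion is immediate since $\Pi^0_2 \subseteq \bP^0_2$.
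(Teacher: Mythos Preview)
Your proposal is correct and follows essentially the same approach as the paper: redefine the $\mathcal{M}^A_t,\mathcal{M}^A_a$ predicates over an effective enumeration of oracle QTMs, keep $U$ as the universal \emph{nondeterministic} machine so that $\np^A\subseteq\bqp^A$ iff $L(U^A)\in\bqp^A$, and read off the $\Sigma^0_2$ description of $\bm{Q}^c$ from the displayed formula. One small slip: in your last sentence of the second paragraph you refer to ``the universal oracle QTM $U$,'' but $U$ throughout is the universal NTM; the universal QTM from Bernstein--Vazirani is only invoked (implicitly) to justify that the QTM predicates are recursive, and it is not the same object as $U$.
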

 
Superpositions of configurations will be denoted using `bra-ket' notation. 
For a superposition $\ket{\phi}$ and a string $y$, the \emph{amplitude measure} $q_y(\phi)$ 
is defined to be the sum of the squared magnitudes of all amplitudes corresponding to query 
configurations intending to ask the oracle about $y$. For our hardness results, as well as our 
results pertaining to Ramsey measure, we borrow strategies from Bennett, Bernstein and Vazirani's 
follow-up paper \cite{Bennett1981RelativeTA}. In particular we makes use of one of the lemmas found there. 

\begin{lemma}[Theorem~3.3 of \cite{strengthsWeaknesses}] \label{oracleAlter}
Fix a quantum Turing machine $M$, an oracle $A$, and an input $x$. For any natural number $i$, 
let $\ket{\phi_i}$ denote the current superposition for $M^A(x)$ at step $i$. Let $\epsilon > 0$. 
Let $F \subseteq [0,T-1] \times \{0,1\}^*$ be a set of time-string pairs such that 
$\sum_{(i,y) \in F} q_y(\phi_i) \leq \frac{\epsilon^2}{2T}$. 
Suppose that for each $(i,y)\in F$ the answer to the query $y\in A$ at time $i$ 
is changed to an arbitrary value. Let $\ket{\phi_i'}$ be the superposition at time $i$
with these modified responses.  
Then $|\ket{\phi_T}-\ket{\phi_T'}| \leq \epsilon$. 
\end{lemma}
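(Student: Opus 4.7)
The plan is a standard hybrid argument: I switch the oracle answers one time step at a time and control the error introduced at each switch by the amplitude measure. For $j \in \{0,1,\ldots,T\}$, let $\ket{\phi^{(j)}_T}$ denote the superposition that results from running $M$ on input $x$ for $T$ steps while modifying the oracle's response at time $i$ for every pair $(i,y) \in F$ with $i < j$, and leaving every other query response unaltered. Then $\ket{\phi^{(0)}_T} = \ket{\phi_T}$ and $\ket{\phi^{(T)}_T} = \ket{\phi'_T}$, and the triangle inequality gives
\[
|\ket{\phi_T} - \ket{\phi'_T}| \;\leq\; \sum_{j=0}^{T-1} \bigl|\ket{\phi^{(j)}_T} - \ket{\phi^{(j+1)}_T}\bigr|.
\]

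Next I would bound each summand individually. The hybrids $\phi^{(j)}$ and $\phi^{(j+1)}$ agree through step $j$ and differ only in how queries $(j,y)\in F$ are answered at step $j$; because the linear time evolution operator is unitary and therefore norm-preserving, the discrepancy measured at time $T$ is exactly the discrepancy introduced by the flip at step $j$. Expanding the common superposition $\ket{\phi_j}$ in the configuration basis and grouping the query-basis amplitudes by the queried string $y$ as $\sum_{y}\sum_{b,c}\alpha_{y,b,c}\ket{y,b,c}$ (where $c$ encodes all remaining configuration data), the query operator sends $\ket{y,b,c}$ to $\ket{y,b\oplus A(y),c}$, while the flipped oracle sends it to $\ket{y,b\oplus(1-A(y)),c}$; these two outputs are orthogonal basis vectors. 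Hence, for each fixed $y$ with $(j,y)\in F$, the squared norm of the difference restricted to queries about $y$ is $2\sum_{b,c}|\alpha_{y,b,c}|^2 = 2\,q_y(\phi_j)$, and since contributions for distinct $y$ land in mutually orthogonal subspaces,
\[
\bigl|\ket{\phi^{(j)}_T} - \ket{\phi^{(j+1)}_T}\bigr|^{2} \;\leq\; 2\sum_{y:\,(j,y)\in F} q_y(\phi_j).
\]

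Finally, applying the Cauchy--Schwarz inequality to the resulting sum of $T$ norms yields
\[
\sum_{j=0}^{T-1}\bigl|\ket{\phi^{(j)}_T}-\ket{\phi^{(j+1)}_T}\bigr| \;\leq\; \sqrt{T}\cdot\sqrt{\sum_{j=0}^{T-1} 2\sum_{y:\,(j,y)\in F} q_y(\phi_j)} \;=\; \sqrt{2T\sum_{(i,y)\in F}q_y(\phi_i)} \;\leq\; \sqrt{2T\cdot \tfrac{\epsilon^2}{2T}} \;=\; \epsilon,
\]
which is the desired inequality.

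The genuinely delicate step is the per-hybrid bound: one has to describe the action of the query operator precisely enough to see that flipping the oracle's answer on the portion of the amplitude devoted to queries about $y$ produces an \emph{orthogonal} replacement, so that the squared perturbation is exactly $2q_y(\phi_j)$ and not some larger multiple, and that perturbations associated with distinct queried strings $y$ do not interfere with one another. Once this orthogonality is nailed down, the unitarity of the remaining $T-j-1$ steps of evolution and the standard Cauchy--Schwarz passage from $\sum_j\sqrt{a_j}$ to $\sqrt{T\sum_j a_j}$ are routine.
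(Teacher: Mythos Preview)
The paper does not actually prove this lemma; it simply cites \cite{strengthsWeaknesses} (``See \cite{strengthsWeaknesses} for proofs of these results''). Your hybrid argument is indeed the standard BBBV approach, and the overall architecture (telescoping, unitarity to freeze the error, Cauchy--Schwarz over the $T$ steps) is correct.

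There is, however, a genuine slip in the direction of your hybrid. You define $\ket{\phi^{(j)}_T}$ by modifying the oracle at times $i<j$ and leaving times $i\geq j$ alone. With this convention, hybrids $j$ and $j+1$ do share a common state at time $j$, but that common state is \emph{not} $\ket{\phi_j}$: it is the result of running the first $j$ steps with the \emph{modified} answers at times $0,\ldots,j-1$. When you then write ``Expanding the common superposition $\ket{\phi_j}$'' and bound the squared perturbation by $2\sum_{y:(j,y)\in F} q_y(\phi_j)$, you are using the amplitude measure of the original, unmodified state, which is what the hypothesis controls --- but that is not the state you are actually perturbing. The bound $\sum_{(i,y)\in F} q_y(\phi_i)\leq \epsilon^2/(2T)$ tells you nothing directly about the query weights in the hybrid states.

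The fix is to reverse the hybrid order: let $\ket{\phi^{(j)}_T}$ be the result of using the \emph{original} oracle at times $i<j$ and the modified answers at times $i\geq j$. Then $\ket{\phi^{(0)}_T}=\ket{\phi'_T}$, $\ket{\phi^{(T)}_T}=\ket{\phi_T}$, and hybrids $j$ and $j+1$ agree through step $j$ with common state exactly $\ket{\phi_j}$, since both run unmodified up to that point. Now your orthogonality computation and the bound $2q_y(\phi_j)$ are legitimate, and the rest of your argument goes through verbatim.
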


See \cite{strengthsWeaknesses} for proofs of these results. This result, taken along with 
Lemma~\ref{distDist}, imposes a bound on the extent to which a single string added to an oracle can 
change the overall superposition after a certain number of steps. We will use this to guarantee 
that by adding certain strings to our oracle, it won't ``perturb'' the superposition enough to 
flip any answers from acceptance to rejection or vice versa. 

\begin{theorem} \label{Qhard}
$\bm{Q}$ is $\bP^0_2$ hard.
\end{theorem}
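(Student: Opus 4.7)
The plan is to adapt the construction from Theorem~\ref{Ohardness} to the quantum setting, defining a continuous reduction $f\colon 2^\omega \to 2^\omega$ from the $\bP^0_2$-complete set $H$ to $\bm{Q}$. Given $b \in 2^\omega$, we build $A = f(b)$ in stages indexed by the bits of $b$; let $d(n)$ denote the number of $1$s among the first $n$ bits of $b$, as in the proof of Theorem~\ref{Ohardness}. At stage $n$ with $b(n)=0$, we add strings of length $n$ coding an EXP-complete language $E$ into $A$, so that when $b$ has only finitely many $1$s the oracle $A$ is EXP-complete up to a finite modification and hence forces $\bm{NP}^A \subseteq \bm{BQP}^A$ by Lemma~\ref{squeeze}. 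At stage $n$ with $b(n)=1$, we diagonalize against the $d(n)$-th QTM on input $1^n$, aiming to make $L_A = \{1^n \colon A \text{ contains a string of length } n\}$ witness $\bm{NP}^A \nsubseteq \bm{BQP}^A$.

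The quantum stage proceeds as follows. Simulate $M_{d(n)}^{A_{n-1}}$ on $1^n$ for $T = n^{\log n}$ steps, producing superpositions $\phi_0, \ldots, \phi_T$, and for each string $y$ let $Q(y) = \sum_{i<T} q_y(\phi_i)$ be its total amplitude measure. The bound $\sum_y Q(y) \leq T$ guarantees that most strings have very small $Q(y)$: fixing a threshold $\tau_n$, there are at most $T/\tau_n$ strings with $Q(y) > \tau_n$, and we add all of these to an ongoing forbidden set $X$. If the simulation accepts $1^n$ with probability at least $2/3$, we set $A_n = A_{n-1}$; if it rejects with probability at least $2/3$, we choose a string $y$ of length $n$ outside $X$ and put $A_n = A_{n-1} \cup \{y\}$. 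Such a $y$ exists because $X$ grows only polynomially across stages while there are $2^n$ candidates, and moreover $Q(y) \leq \tau_n$ by construction, so Lemma~\ref{oracleAlter} controls the perturbation caused by this single modification.

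To control the cumulative damage from all later oracle additions, we enforce the global commitment that no string in $X$ is ever added to $A$, and we arrange the coding stages to contribute only polynomially many strings per length (using a polynomially sparse EXP-complete presentation of $E$, or a redundant polynomial-density encoding). Then for each past stage $n'$, every future addition $y$ of length at most $T_{n'}$ satisfies $Q^{(n')}(y) \leq \tau_{n'}$, and the total amplitude measure of all modifications between $A_{n'-1}$ and $A$ is bounded by $\mathrm{poly}(T_{n'}) \cdot \tau_{n'}$. Choosing $\tau_{n'}$ to be a sufficiently small inverse polynomial in $T_{n'}$ and invoking Lemma~\ref{oracleAlter} bounds the superposition perturbation by some $\epsilon = 1/24$, so Lemma~\ref{distDist} bounds the change in acceptance probability by $4\epsilon < 1/3$. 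Therefore the QTM still disagrees with $L_A(1^{n'})$ by probability margin at least $1/3$ relative to the final $A$, ruling out any bounded-error decision with threshold $2/3$.

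The main obstacle is balancing the parameters so that the forbidden set $X$ stays strictly smaller than $2^n$ at every diagonalization stage while simultaneously absorbing enough high-amplitude strings to tame the perturbations from all future stages; this is what forces the coding stages to use a sparse rather than wholesale encoding of $E$. Granted this, the argument of Theorem~\ref{Ohardness} transfers directly: if $b$ has finitely many $1$s then $A$ differs from $E$ on a finite set and is therefore EXP-complete, giving $\bm{NP}^A \subseteq \bm{BQP}^A$; if $b$ has infinitely many $1$s then, because each QTM appears infinitely often in our enumeration, every QTM is diagonalized against at some $b(n)=1$ stage, witnessing $L_A \in \bm{NP}^A \setminus \bm{BQP}^A$ (membership in $\bm{NP}^A$ via guess-and-query). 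Continuity of $f$ follows exactly as in Theorem~\ref{Ohardness}, since the first $n$ stages query only strings of length at most $T_n$, so membership of $x$ in $f(b)$ depends only on finitely many bits of $b$.
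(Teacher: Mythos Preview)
Your overall architecture is right, but there is a genuine gap in the step where you ``arrange the coding stages to contribute only polynomially many strings per length (using a polynomially sparse $\bm{EXP}$-complete presentation of $E$, or a redundant polynomial-density encoding).'' You need this sparsity because, working at length $n$ directly, a past diagonalization stage $n'$ sees query lengths up to $T_{n'}=n'^{\log n'}$, so every coding stage between $n'$ and $T_{n'}$ can perturb it; bounding the total perturbation by $\mathrm{poly}(T_{n'})\cdot\tau_{n'}$ requires only polynomially many strings added per length. But a polynomially sparse $\bm{EXP}$-complete language is not known to exist, and its existence is not innocuous: if $E'$ is sparse and $\bm{EXP}$-hard under polynomial-time Turing reductions, then $\bm{EXP}\subseteq \bm{P}^{E'}\subseteq \bm{P}/\mathrm{poly}$, which by Meyer's theorem forces $\bm{EXP}=\Sigma_2^p$. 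So the very device you invoke to balance the parameters is at least as strong as a major open problem, and Lemma~\ref{squeeze} as stated gives you nothing for a sparse oracle. The alternative ``redundant polynomial-density encoding'' is not specified, and it is unclear how any fixed polynomial-density encoding could remain $\bm{EXP}$-complete after you strip out the forbidden strings in $X$.

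The paper sidesteps this entirely. Rather than working at length $n$ and maintaining a global forbidden set, it introduces a super-fast-growing spacing function $g$ (with $g(n+1)$ exceeding the maximum query length $g(n)^{\log g(n)}$ at stage $n$; concretely $g(n)=2^{2^{2^n}}$) and only adds strings of lengths in $[g(n),g(n+1))$ at stage $n$. This guarantees that anything added at stage $n$ is simply too long to have been queried at any earlier stage, so there is \emph{no} cumulative interference to control. Consequently the paper can add \emph{all} of the dense $\bm{EXP}$-complete language $E$ at coding stages, and Lemma~\ref{squeeze} applies directly when $b$ has finitely many $1$'s. The only perturbation analysis needed is local to the current stage (adding the single diagonalizing string $y$ of length $g(n)$), and there the counting argument you describe (at most $1568T^2$ high-amplitude strings, far fewer than $2^{g(n)}$) suffices. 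Replacing your length-$n$ stages with length-$g(n)$ stages and dropping the sparsity requirement would repair your argument.
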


\begin{proof}
	
Fix an enumeration of all quantum Turing machines. Again fix an $\bm{EXP}$-complete problem $E$. 
Just as before, given a bit sequence $b\in 2^\omega$, we will conduct a step of diagonalization for each $1$ 
in the sequence, and simply add appropriate strings from $E$ otherwise. However, different from before 
is the number of strings we will potentially have to commit to not adding in order to maintain 
consistency of the construction. Normal deterministic Turing machines can only query polynomially 
many strings in a polynomial amount of runtime. However, when considering the superposition obtained 
from a polynomial number of steps of a quantum machine, we will typically be left with a superposition 
of exponentially many configurations, all of which could be intending to querying something different. 
We will encounter a similar problem for nondeterministic simulations. We can get around this difficulty 
by simulating our machines on very large inputs in comparison to the inputs simulated in previous 
stages of the construction. This will ensure that any string which we might want to add at a current 
stage will simply be too big to have appeared as a query in any simulation prior, leaving us free to 
add whatever we want. As before, we will be simulating for quasipolynomial runtimes. 
We fix an increasing function $g\colon \omega \to \omega$ such that for all $n$, 
$g(n+1) > g(n)^{\log(n)}$. This can be realized explicitly by $g(n) = 2^{2^{2^n}}$ 
(but $2^{2^n}$ is not enough!). This function $g$ is fixed for the rest of the 
argument.

As before, we define a reduction $f$ of $H$ to $\bm{Q}$. Fix $b \in 2^\omega$, and we 
construct an oracle $A$ which will be the value for $f(b)$. The oracle $A$ is again constructed in stages,
and $A_n$ will denote the finite oracle constructed after stage $n$. 
We will have that the $A_n$ are increasing with $n$.  
At stage $n$ we will only add strings of lengths $k$ with $g(n) \leq k <g(n+1)$. 

If $b(n)=0$, then we add to our oracle all strings 
from $E$ of length $k$ such that $g(n) \leq k < g(n+1)$. Note that if $M$ is any QTM, then the 
superposition of $M(1^{g(n-1)})$ after at most $g(n-1)^{\log(g(n-1))}$ computation steps can only contain query 
configurations on strings bounded below this length, and this length by definition of $g$ is 
less than the length of any strings we are adding at stage $n$. From this it will be clear that adding these 
strings from $E$ will have no bearing on any prior simulation.

If $b(n)=1$, then we run the $d(n)^{th}$ QTM on the input $1^{g(n)}$ for 
$T = g(n)^{\log(g(n))}$ many steps relativized to $A_{n-1}$,
where $d(n)$ as before is the number of $m \leq n$ such that $b(m)=1$. 
In constructing $A_n$ we will add at most one string to $A_{n-1}$, and this string (if it exists)
will be of length $g(n)$. Let $\ket{\phi}_i$ denote the superposition after $i$ many steps, relativized 
to $A_{n-1}$. We look for the least $i \leq T$ such that $\ket{\phi}_i$ has a $>\frac{1}{2}$
probability of being in a halting state. If no such $i$ exists, then we set $A_n=A_{n-1}$. 
Suppose such a time exists, and let $i$ be the least such. If $\ket{\phi}_i$ 
has a $>\frac{1}{2}$ probability of being in a halting state which is accepting, then we 
also set $A_n=A_{n-1}$. If $\ket{\phi}_i$ 
has a $\leq \frac{1}{2}$ probability of being in a halting state which is accepting,
then we set $A_n=A_{n-1}\cup \{y \}$, where $y$ is the string constructed below 
such that $| \ket{\phi}_i- \ket{\phi'}_i | < \frac{1}{28}$, where $\ket{\phi'}_i$
denotes the state after running the $d(n)^{th}$ QTM on the input $1^{g(n)}$ for 
$i$ many steps relativized to $A_{n-1}\cup \{ y\}$. Note that the  string $y$ 
will be too large to have appeared in any query 
at a previous stage of the construction. However, such a string could easily 
disturb the current superposition of interest $\ket{\phi}_i$.

We show the existence of a string $y$ of length $g(n)$ 
as mentioned above. 
We will make use of lemma \ref{oracleAlter}. 
With this goal in mind, Let $S$ be the set of strings $y$ of length $g(n)$ such that 

\begin{equation}
\sum_{i=0}^{T-1} q_y(\phi_i) \geq \frac{1}{1568 T}
\end{equation}

Note first that it must be the case that $|S| \leq 1568 T^2$. To see this, 
suppose that $|S| > 1568 T^2$. Then we would have 

\begin{align}
T \geq \sum_{i=0}^{T-1}\sum_{y \in \{0,1\}^*} q_y(\phi_i) &\geq \sum_{i=0}^{T-1} \sum_{y \in S} q_y(\phi_i) \\
&\geq \sum_{y \in S} \frac{1}{1568 T} \\
&> 1568 T^2\frac{1}{1568 T} = T
\end{align}

\noindent
a contradiction. We wish to pull a string $y$ of length $g(n)$ from the complement of this set $S$. 
In the worst case, all strings in $S$ are of this length, but even then the total number of 
strings of length $2^{g(n)}$ far exceeds our upper bound for $S$, and so it follows that there 
exists a $y \notin S$. We take such string $y$ (of length $g(n)$) 
in the definition of $A_n$ above. 
This completes the definition of $A=f(b)$. It remains to show 
that $f$ is a continuous reduction of $H$ to $\bm{Q}$. The fact that $f$ is continuous 
is straightforward as in the proof of Theorem~\ref{Ohardness}. To finish the proof, we show that $b(n)=1$
for infinitely many $n$ iff $A$ is in $\bm{Q}$. Note that regardless of $b$, 
the Solovay language $L_A$ is in $\np^A$.

If $b(n)=1$ for has only finitely many $n$,  then  $A$ equals $E$ up to a finite difference, 
so that $\bm{BQP}^E = \bm{NP}^E$. Thus, $A=f(b) \notin \bm{Q}$.

Suppose next that $b(n)=1$ for infinitely many $n$. Towards a contradiction, 
suppose there is a $\bm{BQP}^A$ machine $M^A$ which decides $L_A$ in time $n^k$ for some $k$.
Fix an $i$ sufficiently large so that $g(i)^k < g(i)^{\log(g(i))}$. 
Since $b(n)=1$ for infinitely many $n$, there is an $l \geq i$ such that $b(l)=1$ 
and $M^A = M_{d(l)}^A$. Thus at stage $l$ of the construction we simulated our machine $M^{A_{l-1}}$ on the 
input $1^{g(l)}$, for a number of steps large enough to ensure that $M^A$ halts.

By assumption, $M^A$ on $1^{g(l)}$ halts at some time $i_0 \leq T$. 
Let $\ket{\phi'}_i$ for $i \leq i_0$ denote the superposition from running 
$M^A$ on $1^{g(l)}$ at time $i$. Note this is the same as running $M^{A_l}$ 
on this input as strings of length $g(l+1)$ are too big to be queried. 
Let $\ket{\phi}_i$ for $i \leq i_0$ denote the state at time $i$ from running 
$M^{A_{l-1}}$ on $1^{g(l)}$. By the conventions on quantum Turing machines, 
$\ket{\phi'}_{i_0}$ is a superposition of configurations, each of which is in a halting state, 
and for all $i<\i_0$, $\ket{\phi'}_{i}$ is a superposition of configurations, none of which is in a 
halting state. From the choice of $y$, Lemma~\ref{oracleAlter} using $\epsilon=\frac{1}{28}$,
and Lemma~\ref{distDist}, that the 
probability that $\ket{\phi}_{i_0}$ is in a halting state is at least $1-\frac{1}{7}>\frac{1}{2}$,
but for $i<i_0$ the probability $\ket{\phi}_{i}$ is in a halting state
is at most $\frac{1}{7}<\frac{1}{2}$. Thus, at step $l$ of the construction, in 
defining $A_l$, we used time $i$. That is, $i=i_0$ is the least time so that 
$\ket{\phi}_{i}$ has a greater than $\frac{1}{2}$ probability of being in a halting 
state.

Suppose first that $M_{d(l)}^A(1^{g(l)})$ halts in an accepting state, that is, with at least 
a $\frac{2}{3}$ probability of acceptance. 
It follows from lemma \ref{oracleAlter} with $\epsilon = \frac{1}{28}$ and 
$F = \{0,1,\ldots,T-1\} \times \{y\}$ that 
that $|\ket{\phi'}_{i_0} - \ket{\phi}_{i_0}| \leq \frac{1}{28}$.
By Lemma~\ref{distDist}, the probability that $M_{d(l)}^{A_{l-1}}(1^{g(l)})$ accepts is within 
$\frac{4}{28} < \frac{1}{6}$ of that of $M_{d(l)}^{A}(1^{g(l)})$ accepting, and thus is 
greater than $\frac{1}{2}$. By the definition of $A_l$ in this case we have that 
$A_l=A_{l-1}$, and so $y \notin A$.  Thus the computation $M_{d(l)}^A(1^{g(l)})$ has made an error.

Suppose next that $M_{d(l)}^A(1^{g(l)})$ halts in a rejecting state, that is, with at least 
a $\frac{2}{3}$ probability of rejection. Then by the same argument of the previous paragraph, 
$M_{d(l)}^{A_{l-1}}(1^{g(l)})$ rejects with probability greater than  $\frac{1}{2}$, and so 
$A_l=A_{l-1}\cup \{ y\}$. So again $M_{d(l)}^A(1^{g(l)})$ has made an error. 

This completes the proof that $f$ is a continuous reduction of 
$H$ to $\bm{Q}$.
\end{proof}

\begin{corollary}
$\bm{Q}$ is $\bP^0_2$ complete. 
\end{corollary}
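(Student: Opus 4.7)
The plan is to observe that this corollary is a direct combination of the two immediately preceding theorems. Recall that by definition a set is $\bP^0_2$-complete exactly when it lies in $\bP^0_2$ and is $\bP^0_2$-hard, so no further construction is required beyond invoking what has already been established.

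First I would cite Theorem~\ref{Qub}, which gives the membership $\bm{Q}\in \Pi^0_2 \subseteq \bP^0_2$ (the lightface class is contained in its boldface counterpart). Then I would cite Theorem~\ref{Qhard}, which provides a continuous reduction $f\colon 2^\omega \to 2^\omega$ of the $\bP^0_2$-complete set $H = \{b : b(n)=1 \text{ infinitely often}\}$ to $\bm{Q}$, and thus establishes that $\bm{Q}$ is $\bP^0_2$-hard. Together these two facts deliver the conclusion by definition.

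There is no real obstacle here, since both halves of the argument have been done above; the corollary is essentially bookkeeping. The only minor subtlety worth stating explicitly, should one wish to expand the proof, is the observation that lightface $\Pi^0_2$ is contained in boldface $\bP^0_2$, so that the upper bound from Theorem~\ref{Qub} matches the lower bound from Theorem~\ref{Qhard} at the appropriate level of the Borel hierarchy. Thus a two-sentence proof suffices.
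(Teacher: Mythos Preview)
Your proposal is correct and matches the paper's approach exactly: the paper states the corollary without proof, treating it as an immediate consequence of Theorem~\ref{Qub} (upper bound) and Theorem~\ref{Qhard} (hardness), which is precisely the two-line combination you describe.
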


\subsection{PH and PSPACE}\label{descriptiveS}
Another famous diagonalization argument is the separation of $\bm{PH}$ and $\bm{PSPACE}$. In 1984, Furst, Saxe and Sipster \cite{Furst2005ParityCA} demonstrated that such a separation implied the existence of quasipolynomial size constant-depth circuits deciding the parity function. They themselves were able to demonstrate the non-existence of constant-depth polynomial size parity circuits, but were not able to rule out quasipolynomial ones. This was accomplished one year later by Yao \cite{Yao1985SeparatingTP}, and then followed up by H\r{a}stad who proved the optimal lower bound via his famous switching lemma\cite{hastad}:

\begin{theorem}[H\r{a}stad]\label{lowerBound}
There is an increasing sequence $\{n_k\}_{k \in \omega}$ such that for all $k$, 
there are no depth $k$ parity circuits on $n > n_k$ many inputs which are of size 
less than $2^{\frac{1}{10}^{\frac{k}{k-1}}n^{\frac{1}{k-1}}}$.
\end{theorem}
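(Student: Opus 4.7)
The plan is to prove Theorem~\ref{lowerBound} by the method of iterated random restrictions, following H\r{a}stad's strategy. I would first set up the framework of a $p$-random restriction $\rho$, which independently assigns each of the $n$ Boolean variables the symbol $*$ (free) with probability $p$ and otherwise sets it to $0$ or $1$ with probability $(1-p)/2$ each. For any Boolean function $f$, the restricted function $f|_\rho$ depends only on those variables left free; crucially, the parity function restricted by $\rho$ is again the parity function on the set of free variables.

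Next I would invoke the switching lemma as the key technical tool: if $f$ is expressible as a $t$-CNF and $\rho$ is a $p$-random restriction, then
\[
\Pr_\rho\bigl[f|_\rho \text{ has no } s\text{-DNF representation}\bigr] \leq (5pt)^s,
\]
and symmetrically with CNF and DNF interchanged. The proof of this lemma is the technical heart of H\r{a}stad's argument; I would quote it from \cite{hastad} rather than reprove it.

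The main argument is a collapse-by-induction on depth. Suppose for contradiction that $C$ is a depth-$k$ circuit of size $S < 2^{(1/10)^{k/(k-1)} n^{1/(k-1)}}$ computing parity on $n$ variables. Without loss of generality the bottom fan-in is small (at most some $t = O(\log S)$), so the two bottom layers form a family of small CNFs. I would then apply $k-2$ successive $p$-random restrictions, with parameter $p$ essentially $(1/10)\, n^{-1/(k-1)}$ up to lower-order factors; at each stage the switching lemma converts every bottom CNF into an equivalent small-width DNF (or vice versa), so that the new bottom AND layer can be merged with the OR layer above it, reducing the total depth by one. A union bound over the at most $S$ bottom subcircuits keeps the failure probability per stage strictly below $1$, so some composite restriction achieves the full collapse. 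Meanwhile, the expected number of surviving free variables is on the order of $np^{k-2}$, which exceeds $1$ for the chosen parameters, so with positive probability at least one free variable remains. The resulting depth-$2$ formula must still compute parity on those free variables, but any CNF or DNF computing parity on $m$ variables requires size at least $2^{m-1}$, contradicting our size bound once the arithmetic is tracked.

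The main obstacle is parameter bookkeeping: optimizing the choice of the $k-2$ restriction probabilities so that (a) the union bound over the at most $kS$ possible switching failures stays below $1$, (b) enough free variables survive to force the final depth-$2$ contradiction, and (c) the constants line up to yield precisely the exponent $(1/10)^{k/(k-1)} n^{1/(k-1)}$ rather than a weaker bound. H\r{a}stad's careful analysis shows this exponent is essentially tight; obtaining it requires balancing the per-stage error probability $(5pt)^s$ against the shrinkage rate $p^{k-2}$ of the free-variable set, an optimization I would carry out by a Lagrange-style tuning of $p$ as a function of $k$ and $n$, with the sequence $\{n_k\}$ absorbing the lower-order terms needed for the union bound and survival argument to take effect.
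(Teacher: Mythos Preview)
The paper does not supply its own proof of Theorem~\ref{lowerBound}; it is quoted as a known result of H\r{a}stad, with a citation to \cite{hastad}, and is then used as a black box in the proof of Theorem~\ref{Shard}. Your proposal correctly sketches the standard H\r{a}stad argument via iterated random restrictions and the switching lemma, which is precisely the proof in the cited source, so there is no discrepancy to discuss beyond noting that the paper itself treats this theorem as imported rather than proved.
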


We will use this result to obtain our result that $\bm{S}$ is $\bP^0_2$-hard. 
We first fix some terminology and prove the upper-bound for the complexity of $\bm{S}$.

Let us call a \emph{PH-expression} an expression of the form
\begin{align} \label{phexp}
	\phi(y) = \exists^px_1 \forall^px_2 \ldots Q^px_l R(x_1,x_2,\ldots,x_l,y)
\end{align}
where $R$ is a polynomial time computable relation with degree $k$ for
some $k$, and the superscript $p$ indicates that we are quantifying
over strings of length less than or equal to $|y|^k$.  An effective
enumeration over all Turing machines can clearly be turned into an
effective enumeration over all PH-expressions, for all $l$. A
language is in $\bm{PH}$ iff it has a PH-expression defining
membership. Likewise, by relativizing $R$ to an oracle $A$, we obtain a definition of the
relativized polynomial hierarchy $\bm{PH}^A$.

\begin{theorem} \label{Sub}
$\bm{S} \in \Pi^0_2$ (and therefore also Borel).
\end{theorem}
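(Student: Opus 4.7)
The plan is to adapt the universal-machine trick from Theorem~\ref{ocomu}. Observe first that $\bm{PH}^A \subseteq \bm{PSPACE}^A$ always, so $A \notin \bm{S}$ iff $\bm{PSPACE}^A \subseteq \bm{PH}^A$. Fix a universal polynomial-space oracle Turing machine $U_{ps}$ whose relativization $L(U_{ps}^A)$ is $\bm{PSPACE}^A$-complete under polynomial-time reductions uniformly in $A$; such a machine is standard. Then the equality $\bm{PH}^A = \bm{PSPACE}^A$ is equivalent to the single assertion $L(U_{ps}^A) \in \bm{PH}^A$, which reduces the problem from ``every PSPACE language is a PH language'' to ``one specific language is a PH language.''

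Next I exploit the effective enumeration $\{\phi_e\}_{e \in \omega}$ of PH-expressions described immediately above the theorem. Each $\phi_e$ comes bundled with a quantifier depth $l_e$, a degree bound $k_e$, and a (code of a) polynomial-time relation $R_e$. For fixed $e$ and input $x$, evaluating $\phi_e^A(x)$ is a finite alternating search over strings of length at most $|x|^{k_e}$ together with finitely many polynomial-time evaluations of $R_e^A$, so it queries $A$ at only finitely many places and is recursive in the corresponding finite fragment of $A$. The same holds for $x \in L(U_{ps}^A)$, since a polynomial-space computation on input $x$ can query $A$ only on strings of length polynomial in $|x|$. Putting these together,
\[
A \notin \bm{S} \iff \exists e\, \forall x\, \bigl[\phi_e^A(x) \Leftrightarrow x \in L(U_{ps}^A)\bigr],
\]
a visibly $\Sigma^0_2$ statement, so $\bm{S} \in \Pi^0_2$.

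The main obstacle, relative to the $\bm{O}$ and $\bm{Q}$ cases, is that $\bm{PH}$ is not the language class of any single uniform machine model but rather the union over all finite levels of the polynomial hierarchy; one cannot simply quote ``a universal PH machine.'' PH-expressions are the natural substitute: the quantifier level $l_e$ becomes part of the existentially quantified witness $e$, so a single existential quantifier over expressions plays the role that a universal oracle machine plays in Theorem~\ref{ocomu}. Once this repackaging is in place, the complexity calculation is mechanical and the $\Sigma^0_2$ bound for $\bm{S}^c$ follows.
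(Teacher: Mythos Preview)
Your proposal is correct and follows essentially the same approach as the paper: reduce $A\notin\bm{S}$ to the assertion that a single, uniformly-in-$A$ $\bm{PSPACE}^A$-complete language lies in $\bm{PH}^A$, then enumerate PH-expressions to obtain a $\Sigma^0_2$ formula for $\bm{S}^c$. The only cosmetic difference is that the paper uses a relativized $QSAT^A$ (built by modifying the standard Savitch-style reduction to handle oracle queries in the base case) as its uniformly complete language, whereas you invoke a universal polynomial-space oracle machine $U_{ps}$; both choices serve the same purpose and the remainder of the argument is identical.
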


\begin{proof}
It is well known that the quantified Boolean satisfiability
problem, which we will denote $QSAT$, is $\bm{PSPACE}$
complete.  However, with
a small modification of the standard reduction witnessing that
$QSAT$ is $\bm{PSPACE}$ hard, we can define a reduction from
$\bm{PSPACE}^A$ to the ``relativized language'' $QSAT^A$,
creating a version of the $QSAT$ problem which is always
complete for $\bm{PSPACE}^A$ for all $A$. With this language
defined, it is clear that $A \notin \bm{S}$ iff $QSAT^A \in
\bm{PH}^A$, a statement that is easily expressible with a
$\Sigma^0_2$ formula. To briefly recall how the normal
reduction works: we have a Turing machine $M$ operating in
space $n^k$ for some $k$, and we wish to create a family of
quantified Boolean formulas
\[ \phi(x) = \exists_{y_1} \forall_{y_2} \ldots Q_{y_{|x|^k}} \psi(y_1,\ldots,y_{n^k},x) \]
such that $x$ is in the language decided by $M$ iff
$\phi(x)$. Since our machine uses space $n^k$, we can be sure
that the configurations of the machine are representable by
strings of length polynomial in $|x|$. First, one define a
base case $\psi_0(A,B)$ representing the claim that
``configuration $A$ yields configuration $B$ in one step or
that $A = B$.'' Then they define $\psi_i(A,B)$ to be the
statement that there exists an intermediary configuration $Z$
such that $\psi_{i-1}(A,Z)$ and $\psi_{i-1}(Z,B)$, i.e. that
there exists a sequence of configurations from $A$ to $B$ of
length less than or equal to $2^i$. This is stated in a clever
way such that only one ``instance'' of the $\psi_{i-1}$ is
needed, so that the number of configurations to be quantified
over is kept linear in $i$. The only modification of this
which is necessary is to add to the base case the extra
complication necessary for checking that a string is in the
oracle. That is to say, within the base case, where we
normally have the Boolean formula representing the claim that
``configuration $A$ yields configuration $B$ in zero or one
steps,'' we have the longer claim:
\begin{itemize}
\item 
``configuration $A$ is not in a query state and
yields configuration $B$ in zero or one steps,
\emph{or}
\item $A$ is in a query state, the string being
queried is in $A$, and $B$ is the configuration
yielded by $A$ given that the query accepts,
\emph{or}
\item $A$ is in a query state, the string being
queried is not in $A$, and $B$ is the configuration
yielded by $A$ given that the query rejects.''
\end{itemize}
Clearly this requires no extra quantification and is only
linearly longer than the original, and just as clearly the
resulting expression will be $\bm{PSPACE}^A$ complete for any
oracle $A$. \par We have then that $A \notin S$ iff $QSAT^A
\in \bm{PH}^A$. Let $\psi^A(x,y)$ be the formula representing
that ``relative to $A$, the $x^{th}$ $\bm{PH}^A$ expression
agrees with $QSAT^A$ on the input $y$,'' which can easily be
seen as recursive using the appropriate modifications of our
$\mathcal{M}$ functions from earlier. We have then that $A
\notin \bm{S}$ iff $\exists x \forall y \psi^A(x,y)$, so that
$\bm{S}^c$ is $\bS^0_2$ complete.
\end{proof}

We now turn to the lower-bound for the complexity of $\bm{S}$.

By fixing a $y$, one can view a PH expression as in (\ref{phexp})  as a constant depth circuit whose inputs
correspond to strings which we may or may not decide to put in the
oracle. For a fixed string length $n$ and fixing membership in $A$ for
all other relevant strings of other length, we will see that these
circuits are quasipolynomial size in it's number of inputs $2^n$. 

The way this works is as follows. At the top of the circuit is an OR gate,
representing the leading $\exists$ quantifier. This gate has fanin
$O(2^{n^k})$, where $k$ is the polynomial bound of the PH-expression;
one wire for every possible string of length $\leq n^k$. Each of these
wires is the lone output of an AND gate, representing the first
$\forall$, which itself has fanin $O(2^{n^k})$ for the same reason. We
continue like this until reaching a depth equal to the number of
alternating quantifiers $l$. Note at this point that with $y$ fixed,
each bottom level gate corresponds to a particular selection of
$x_1,\ldots,x_n$; all actual inputs to the PH-expression are fixed by
this point. As mentioned, what we see as varying is not the inputs to
the expression but rather the strings which may or may not be in the
oracle $A$, and in particular those which are exactly length $n$ (we
will assume that all strings of relevant length not equal to $n$ have
been fixed as in or out of $A$, and therefore these wires can be seen
as literals). In this way, the value of $R^A(x_1,\ldots,x_l,y)$ for
any selection of inputs can be seen a function of some 
subset of the $2^n$ strings of length $n$ which may or may not
be in the oracle. Each bottom level gate representing the final
quantifier can therefore be seen as being fed input from a DNF circuit
in which each bottom level AND gate represents a particular selection
of strings to explicitly have or not have in $A$ so as to ensure a
positive result. Finally we note that the fanin of these parent OR
gates can all be assumed the same as more or less the same as the
others: $O(2^{n^k})$. The reason for this is that when a machine makes
a query, it's computation goes in one of exactly two ``directions,'' and
at most polynomially many such choices are made. Thus one can imagine
a binary tree of polynomial depth, where some number of these results
in acceptance. Since there are at most $O(2^{n^k})$ many accepting
``paths,'' there are also at most $2^{n^k}$ many combinations of strings
which need to be explicitly specified to ensure acceptance of the
machine. We are therefore left with a depth $l+1$ circuit which takes
$m=2^n$ many inputs, with size $O(2^{l\log^k(m)})$, which is
equivalent to the polynomial expression. The quasipolynomial size is
sufficient to be sure that the circuits in question cannot decide
parity, and this ensures the existence of a set of strings which can
be added to the oracle such that the parity of it doesn't match the
circuit's output.

Specifically, for a given oracle $A$ we consider the language
\[ 
L_A = \{1^j: \textrm{ there is an even number of strings of length $j$ in $A$}\}. 
\]
Note that $L_A$ is in $\bm{PSPACE}^A$ for any $A$, 
since one can simply keep a tally using linear space
while repeatedly querying every length $n$ string. We will 
use the above circuit construction and Theorem~\ref{lowerBound}
diagonalize out of $\bm{PH}^A$ while staying inside of $\bm{PSPACE}^A$
when $b\in 2^\omega$ is in $H$. We use an argument similar to 
that of the previous theorems.

\begin{theorem} \label{Shard}
$\bm{S}$ is $\bP_2^0$ hard.
\end{theorem}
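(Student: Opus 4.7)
The plan is to construct a continuous reduction $f : 2^\omega \to 2^\omega$ of the set $H$ from equation~(\ref{infinitelyOne}) to $\bm{S}$, following the same template as Theorems~\ref{Ohardness} and~\ref{Qhard}. Given $b \in 2^\omega$, I build $f(b) = A = \bigcup_n A_n$ in stages, where at stage $n$ the oracle is only modified on strings whose length lies in a window $[g(n), g(n{+}1))$ determined by a fast-growing function $g$. If $b(n) = 0$, I put every string of the fixed $\bm{EXP}$-complete language $E$ of length in that window into $A_n$. If $b$ has only finitely many $1$'s, then $A$ differs from $E$ on only finitely many strings, hence $A$ is $\bm{EXP}$-complete and Lemma~\ref{squeeze} yields $\bm{PH}^A = \bm{PSPACE}^A$, so $A \notin \bm{S}$.

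The diagonalizing stages, where $b(n)=1$, are arranged to kill the $d(n)$-th PH-expression $\phi_{d(n)}$ as a candidate description of the Solovay-style language $L_A = \{1^j : A \text{ contains an even number of strings of length } j\}$, which is always in $\bm{PSPACE}^A$. At such a stage, I fix the input length $m = g(n)$ and view $\phi_{d(n)}(1^m)$, after all lengths $< m$ have been pinned down by $A_{n-1}$, as a bounded depth circuit of depth $l+1$ and size $O(2^{l \log^k(2^m)})$ over the $2^m$ Boolean variables indicating membership of length-$m$ strings in the oracle, as explained in the paragraph preceding the theorem. I choose $g$ to grow so fast that (i) $2^m$ exceeds the Håstad threshold $n_{l+1}$ of Theorem~\ref{lowerBound} for the relevant depth $l+1$, (ii) the size $O(2^{l\log^k(2^m)})$ lies below Håstad's parity lower bound $2^{(1/10)^{(l+1)/l}\,(2^m)^{1/l}}$, and (iii) the runtime $m^k$ of any polynomial-time predicate appearing in $\phi_{d(n)}$ is too small for $\phi_{d(n)}(1^m)$ to reference strings of length $\geq g(n{+}1)$. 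A concrete choice like $g(n) = 2^{2^{2^n}}$ suffices once one feeds in the degrees and quantifier depths enumerated so far.

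Given these parameters, the bounded-depth circuit cannot compute the parity function on $2^m$ inputs, so there is a set $T$ of length-$m$ strings whose parity disagrees with the circuit's output on the characteristic vector $\chi_T$. Setting $A_n = A_{n-1} \cup T$ then forces $\phi_{d(n)}(1^m)$ to disagree with $1^m \in L_A$. Crucially, by (iii), no earlier stage's computation queries strings of length $\geq g(n)$, so adding $T$ does not retroactively disturb prior stages; likewise, the later $b(n')=0$ stages only add strings of length $\geq g(n{+}1)$, which cannot affect $\phi_{d(n)}(1^m)$. If $b$ has infinitely many $1$'s then every PH-expression $\phi$ is enumerated with index $d(n)$ at arbitrarily large stages, so at a sufficiently late one the diagonalization succeeds and $\phi$ fails to define $L_A$; hence $L_A \notin \bm{PH}^A$, and $A \in \bm{S}$.

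The main obstacle, as in the previous hardness proofs, is the bookkeeping that keeps the circuit-theoretic picture honest: one must verify that every query $\phi_{d(n)}(1^m)$ can possibly make at the diagonalization stage is either answered by a literal already committed in $A_{n-1}$ or is a length-$m$ string whose membership is what the circuit is reading, and that the size and depth bounds on this circuit are genuinely within Håstad's range once the enumeration parameters $k$ and $l$ have been exposed at stage $n$. Continuity of $f$ then follows exactly as in the proof of Theorem~\ref{Ohardness}: membership of any fixed string $x$ in $f(b)$ is decided after finitely many stages, and each stage consults only finitely many bits of $b$.
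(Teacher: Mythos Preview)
Your proposal is correct and follows essentially the same approach as the paper: a delayed Furst--Saxe--Sipser/H\r{a}stad diagonalization against the parity test language $L_A=\{1^j:\text{$A$ has evenly many strings of length $j$}\}$, interleaved with padding by an $\bm{EXP}$-complete language $E$, with the same growth rate $g(n)=2^{2^{2^n}}$ and the same appeal to Theorem~\ref{lowerBound}. The only cosmetic difference is that the paper adds the $E$-strings in every window $[g(n),g(n{+}1))$ regardless of $b(n)$ (overriding the length-$g(n)$ slice at diagonalization stages), whereas you add them only when $b(n)=0$; either variant makes $A$ a finite modification of $E$ when $b\notin H$.
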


\begin{proof}
As before, fix an $\bm{EXP}$-complete language $E$. We will
define a continuous mapping $f\colon 2^\omega \to 2^\omega$
such that a string $b$ is in $H$ iff $\bm{PH}^{f(b)} \neq
\bm{PSPACE}^{f(b)}$, where $H$ is the collection of strings
with infinitely many $1$'s just as before. The proof will be
similar in strategy to what we've done twice before: perform a
delayed diagonalization in which we only do the next stage
upon encountering a $1$ for the fixed bit string $b$ which we
are mapping to an oracle $A$. Let $g(n)$ be an increasing
function which grows fast enough to ensure that the circuit
representing a PH-expression on $1^{g(n)}$ can never decide
parity by \ref{lowerBound}, and that no strings of length at
least $g(n)$ could possibly appear as relevant to circuits
constructed in previous stages. For the first condition
it will suffice to have 
$2^{\log(n) g(n)^{\log(n)}} <2^{\frac{1}{100} 2^{g(n)/\log(n)}}$, which
will hold for any increasing $g$ for all large enough $n$. 
For the second condition it suffices to have 
$g(n+1)> g(n)^{\log(n)}$, as in Theorem~\ref{Qhard}.
So, $g(n)=2^{2^{2^n}}$ suffices.

We describe a stage $n$
of the construction. First of all, regardless of the
value of $b(n)$, we will always add to $A$ all strings from $E$
of lengths $k$ with $g(n)\leq k <g(n+1)$.
When $b(n)=0$, this is all
we do. Now suppose $b(n)=1$. In this case, we consider the
$d(n)^{th}$ PH expression $\phi$ as in Equation~\ref{phexp}, 
where again $d(n)$ is the number of $m \leq n$ with $b(m)=1$.
Say that $k \in \omega$ is the
polynomial bound of the expression, and $l$ the number of
quantifier alternations. Create as described above a circuit
representing the PH-expression, with the input $y$ fixed as
$1^{g(n)}$, which has depth $l+1$ and size $O(2^{l\log^k(m)})$
where $m := 2^{g(n)}$, and whose inputs correspond to strings
of length $g(m)$ which we may or may not decide to add to the
oracle. We are promising as before to not add any strings of
length not equal to $g(n)$ at stage $n$ of the
construction. By this commitment, along with a commitment to
not add any strings of lengths between $g(n)$ and $g(n+1)-1$
besides those from $E$, we can assume that strings of length
$g(n)$ are the only non-literal inputs to the circuit. By
construction we can be sure that this circuit fails to decide
the parity function for strings of length $g(n)$. We use here 
the fat that the PH expression $\phi$ will be considered at infiitely 
many stages $n$, and that for large enough such $n$ we will have that 
$2^{l\log^k(m)} < 2^{\log(n) \log^{\log(n)} (m)}=2^{\log(n) g(n)^{\log(n)}}$
which is less than $2^{\frac{1}{100} 2^{g(n)/\log(n)}}$, which is 
below the bound of Theorem~\ref{lowerBound}.
This means that there must exist an assignment to the inputs such that
the output of the circuit cannot be the parity of the
assignment, i.e., there exists a collection of strings of
length $g(n)$ which we can add to $A_{n-1}$ so that the $d(n)^{th}$
PH-expression cannot be deciding the test language
\[ 
L_A = \{1^n: \textrm{ $A$ has an even number of strings of length $n$}\} 
\]

This completes the description of $A$. Define the function $f$ by 
setting $f(b)=A$, where $A=\bigcup_n A_n$ is the oracle constructed
from $b$.  It is clear that if $b$ has an infinite
number of $1$'s, then we will ensure that every PH-expression
disagrees with the language $L_{f(b)}$ on at least one input, and
therefore $L_A \notin \bm{PH}^A$. On the other hand, if $b$ has only
finitely many $1$'s, then $A$ will be equal to $E$ up to a finite
difference, and thus $\bm{PH}^{f(b)} = \bm{PSPACE}^{f(b)}$. Thus we
have confirmed that $b\in H$ iff 
$\bm{PSPACE}^{f(b)}=\bm{PH}^{f(b)}$. 
As before, continuity is clear
from the observation that deciding if a string is in $f(b)$ only
requires a finite initial segment of $b$, which is clear from the
construction.
\end{proof}

\begin{corollary}
$\bm{S}$ is $\bP^0_2$-complete.
\end{corollary}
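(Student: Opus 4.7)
The plan is essentially a one-line combination: by the definition of completeness given in the introduction, to show that $\bm{S}$ is $\bP^0_2$-complete it suffices to verify two things, namely that $\bm{S} \in \bP^0_2$ and that $\bm{S}$ is $\bP^0_2$-hard. The first is precisely Theorem~\ref{Sub}, where the upper bound was obtained by using that a relativized version of $QSAT$ remains $\bm{PSPACE}^A$-complete for every $A$, so that the collapse $\bm{PH}^A = \bm{PSPACE}^A$ is equivalent to the $\bS^0_2$ statement $QSAT^A \in \bm{PH}^A$. The second is precisely Theorem~\ref{Shard}, where a continuous reduction $f$ from the canonical $\bP^0_2$-complete set $H$ (the reals with infinitely many $1$'s) into $\bm{S}$ was constructed by a delayed diagonalization driven by H\r{a}stad's switching lemma bound (Theorem~\ref{lowerBound}) on the size of constant-depth parity circuits.

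Thus the proof I would write would simply invoke both theorems: combining the containment from Theorem~\ref{Sub} with the hardness from Theorem~\ref{Shard} gives $\bm{S} \in \bP^0_2 \setminus \bS^0_2$, and so $\bm{S}$ sits at the exact complexity $\bP^0_2$. There is no additional step needed, and in particular no genuine obstacle beyond correctly citing the two preceding results; all the real content, and the only step that required serious work, was the construction in Theorem~\ref{Shard} of an oracle at each stage ensuring that the $d(n)^{\text{th}}$ PH-expression fails on $1^{g(n)}$ via a parity-circuit argument, while preserving the $\bm{EXP}$-collapse at the unused stages. With that machinery already in hand, the corollary is immediate.
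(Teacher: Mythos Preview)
Your proposal is correct and matches the paper's approach exactly: the corollary is stated without proof in the paper precisely because it follows immediately by combining Theorem~\ref{Sub} (the $\Pi^0_2$ upper bound) with Theorem~\ref{Shard} (the $\bP^0_2$-hardness). There is nothing more to add.
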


\subsection{The Descriptive Complexity of More Common Classes}
Lastly, we consider the descriptive complexity of common complexity
classes themselves. Since these collections of oracles are countable, 
they cannot be $\Pi^0_2$ complete as in our previous theorems. 
However, it is quite
easy to demonstrate that virtually all of them are $\Sigma^0_2$
and $\bS^0_2$-complete. 
We will consider the class $\bm{P}$ as our prototypical
example. It is easy to see that $\bm{P}$ is naturally a $\Sigma^0_2$
set. Simply note that a language is in $\bm{P}$ iff there
exists a Turing machine exists which agrees with the language on
all inputs. So, $A \in P$ iff $\exists i \ \forall n\ [(n\in A)
\leftrightarrow M_i(n)=1)]$, which shows $\bm{P} \in \bS^0_2$.
The same simple computations works for the other complexity classes.

It is also fairly
simple to show $\bS^0_2$ hardness for $\bm{P}$ and the other classes. 
For example, we prove the following. 

\begin{theorem}
$\bm{P}$ is $\bS^0_2$-complete. 
\end{theorem}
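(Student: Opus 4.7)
The plan is to reduce the standard $\bS^0_2$-complete set $H^c=\{b\in 2^\omega : b(n)=1 \text{ for only finitely many } n\}$ continuously to $\bm{P}$. Since the paper's earlier $\bP^0_2$-hardness proofs use delayed diagonalization indexed by the $1$'s of $b$, the same bookkeeping works here, but with the roles flipped: finitely many $1$'s should make the constructed oracle \emph{simple} rather than collapsible, and infinitely many $1$'s should be used to diagonalize against every polynomial-time machine.

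The construction is as follows. Given $b\in 2^\omega$, let $d(n)$ denote the number of $1$'s among $b(0),\dots,b(n)$. Define
\[
f(b)=\{\, 1^n : b(n)=1 \text{ and } M_{d(n)}(1^n) \text{ fails to accept within } n^{\log n} \text{ steps}\,\}.
\]
All strings not of the form $1^n$ are excluded from $f(b)$. Continuity is immediate: membership of $1^n$ in $f(b)$ depends only on $b(0),\dots,b(n)$, since $d(n)$ and the simulation of $M_{d(n)}(1^n)$ for $n^{\log n}$ steps are finite objects determined by this initial segment.

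If $b\in H^c$, then $b(n)=1$ for only finitely many $n$, so $f(b)$ is a finite language and therefore trivially in $\bm{P}$. Conversely, if $b\notin H^c$, I claim $f(b)\notin \bm{P}$. Suppose toward contradiction $f(b)$ were decided by some machine running in time $n^k$; fix such a machine and call it $M$. Because every deterministic machine appears infinitely often in the enumeration $\{M_i\}$, there are infinitely many indices $i$ with $M_i=M$. Because $b$ has infinitely many $1$'s, $d(n)$ tends to infinity along positions where $b(n)=1$, so we may pick $n$ large enough that $b(n)=1$, $M_{d(n)}=M$, and $n^k<n^{\log n}$. At this $n$ the simulation of $M_{d(n)}(1^n)$ inside the definition of $f(b)$ runs long enough to see $M$ halt, and by construction $1^n\in f(b)$ iff $M(1^n)$ rejects, contradicting that $M$ decides $f(b)$.

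The only subtle point is lining up the diagonalization index with the machine to be defeated: this is where the assumption that every machine occurs infinitely often in the enumeration is used, together with the fact that $d(n)\to\infty$ whenever $b$ has infinitely many $1$'s. Once that alignment is in place, the argument is routine, and combined with the $\bS^0_2$ upper bound already noted it gives $\bS^0_2$-completeness of $\bm{P}$.
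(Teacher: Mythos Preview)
Your proof is correct and essentially identical to the paper's: both build $f(b)$ by putting $1^n$ in whenever $b(n)=1$ and the $d(n)$th machine fails to accept $1^n$ within $n^{\log n}$ steps, then argue that finitely many $1$'s yields a finite language while infinitely many $1$'s diagonalizes against every polynomial-time decider. The only cosmetic difference is that the paper phrases the trigger as ``terminates with a rejection'' rather than ``fails to accept,'' but since the relevant machine halts within the quasipolynomial bound at the diagonalization step, the two conditions coincide there.
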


\begin{proof}
As the set $H$ of bit strings which are $1$ infinitely often is $\bP^0_2$-complete, 
the set $2^\omega \sm H$ of bit strings which are $1$ finitely often is $\bS^0_2$-complete. 
We define a reduction $f$ of $2^\omega \sm H$
to $\bm{P}$. For any $b \in 2^\omega$ we define 
$f(b)=L$ as follows. 
Let $d(i)$ be the number of $1$s which have occurred 
by the $i^{th}$ bit.
We create $L$ in stages. 
At stage $n$, if $b(n)=0$ we set $L_n=L_{n-1}$. 
If $b(n)=1$, we run the $d(n)$th polynomial time machine on input $1^n$  for time $n^{\log(n)}$.
If that computation terminates with a rejection, then we let $L_n=L_{n-1}
\cup \{ 1^n\}$, and otherwise set $L_n=L_{n-1}$. 
If $b$ has 
finitely many $1$s, then we will only have added a finite number of strings to $L$ 
so clearly $L\in \bm{P}$. 
On the other hand if $b(n)=1$ for infinitely many $n$, 
then we have diagonalized against all polynomial time machines (recall
that our enumeration of polynomial time machines repeats each machine infinitely often,
and $\log(n)$ will eventually be larger than any fixed $k$).
Thus the language $L$ created is in $\bm{P}$ iff $b\in 2^\omega \sm H$.

\end{proof}

It is clear that the argument just supplied for $\bm{P}$ works for other  complexity classes. 
We have the following, for example. 

\begin{theorem}
$\bm{NP}$, $\bm{EXP}$, $\cnp$, $\bm{BQP}$, $\bm{PH}$, $\bm{L}$, $\bm{NL}$, $\bm{PSPACE}$ are all
$\bS^0_2$-complete.
\end{theorem}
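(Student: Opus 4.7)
The plan is to mirror the proof given for $\bm{P}$ essentially verbatim in each case. The $\bS^0_2$ upper bound is immediate by the same observation used for $\bm{P}$: for any class $\bm{C}$ admitting a computable enumeration $\{C_i\}$ of witnessing machines (or, for $\bm{PH}$, PH-expressions), a language $A$ lies in $\bm{C}$ iff $\exists i\,\forall n\,[(n\in A)\leftrightarrow C_i\text{ accepts }n]$, which is a $\bS^0_2$ formula. Such enumerations have already been established or are implicit earlier in the paper for $\bm{NP}$, $\cnp$, $\bm{BQP}$, and $\bm{PH}$, and the remaining cases ($\bm{EXP}$, $\bm{L}$, $\bm{NL}$, $\bm{PSPACE}$) are entirely standard.

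For $\bS^0_2$-hardness, I will define a continuous reduction $f\colon 2^\omega \sm H \to \bm{C}$ following the delayed diagonalization used for $\bm{P}$. Fix an enumeration $\{C_i\}$ for $\bm{C}$ in which every machine appears infinitely often, together with a uniform simulation budget that eventually dominates the true resource bound of any fixed machine: $n^{\log n}$ steps for the polynomial-time and log-space classes, and $2^{n^{\log n}}$ steps for $\bm{EXP}$. Build $L=f(b)$ in stages: at stage $n$, if $b(n)=0$ do nothing, and if $b(n)=1$ simulate $C_{d(n)}$ on $1^n$ within the budget and add $1^n$ to $L$ precisely when this choice forces disagreement with $C_{d(n)}$ (for deterministic classes, reject means add; for the nondeterministic classes $\bm{NP}$, $\bm{NL}$, $\bm{PH}$, absence of an accepting path within the budget means add). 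If $b\notin H$ then $L$ is finite and hence trivially in $\bm{C}$, while if $b\in H$ then because each machine reappears infinitely often with arbitrarily large budget, each is eventually trapped and diagonalized against, so $L\notin \bm{C}$. The $\cnp$ case requires no new work: the reduction for $\bm{NP}$ composed with the continuous complementation map $A\mapsto A^c$ is a reduction of $2^\omega\sm H$ to $\cnp$, since $A\in\cnp$ iff $A^c\in\bm{NP}$.

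The only genuine wrinkle is $\bm{BQP}$, whose acceptance is probabilistic. The fix is exactly the convention already used in Theorem~\ref{Qhard}: at stage $n$ with $b(n)=1$, compute to sufficient precision (from the efficiently computable transition amplitudes) the acceptance probability of the $d(n)$th QTM on $1^n$ within the time budget, and add $1^n$ to $L$ iff this probability is at most $\frac{1}{2}$. Any QTM purporting to decide $L$ in $\bm{BQP}$ must on each input either accept or reject with probability $\geq \frac{2}{3}$, hence cannot land exactly at $\frac{1}{2}$; so whichever side it falls on, it disagrees with $L$ at the stage where we trap it. I expect this to be the only step requiring any real thought beyond recycling the $\bm{P}$ argument; each of the remaining classes is a straightforward adaptation where the only choices to make are the enumeration and the simulation budget.
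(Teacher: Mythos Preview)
Your proposal is correct and matches the paper exactly: the paper gives no proof beyond the single remark that ``the argument just supplied for $\bm{P}$ works for other complexity classes,'' and you have simply spelled that out with appropriate per-class adjustments. One minor omission is that your list of simulation budgets covers the polynomial-time, log-space, and $\bm{EXP}$ cases but leaves $\bm{PSPACE}$ unaddressed; a time budget of $2^{n^{\log n}}$ (or equivalently a space budget of $n^{\log n}$) fills that gap trivially.
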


\section{Ramsey Theory}\label{section:Ramsey}

The notion of {\em Ramsey large} is, aside from measure and category, another important 
measure of size for sets $S\subseteq [\omega]^\omega$. Recall we are identifying infinite subsets 
of $\omega$ with increasing functions $f \colon \omega \to \omega$. 
We let $S^c$ denote the complement $[\omega]^\omega \sm S$. 
One way to  define this notion is through the {\em Ellentuck topology}.  Letr $a$ be a finite subset 
of $\omega$ and $A\subseteq \omega\sm s$ an infinite set. Then a basic open set 
in the Ellentuck topology is a set of the form 
\[ [a,A] = \{S \in [\omega]^{\omega}\colon S\sm A=a \wedge S\sm a \subseteq A\} \]
If we choose to view the infinite subsets of $\omega$ as increasing functions,
then 
\[ [a,A] = \{ f \in [\omega]^{\omega}\colon
\forall i< |a| (a(i)=f(i)) \wedge \forall i \geq  |a| (f(i) \in A)\} \]

The Ellentuck topology is closely related to {\em Mathias forcing} in set theory,
where the objects $[a,A]$ as above are the elements of the forcing partial order. 
The Ellentuck topology/ Mathias forcing arises naturally in the study of 
partition properties on $\omega$. Recall the classical Ramsey theorem
says that for any partition $P\colon \omega^m \to k$, where $m,k \in \omega$,
there is an infinite set $H\subseteq \omega$ which is homogeneous 
for the partition. That is, $P\res [H]^\omega$ is constant. With the axiom of choice, 
no infinite cardinal, including $\omega$, can have an infinite exponent 
partition property, but if we restrict to reasonable definable 
sets $S \subseteq [\omega]^\omega$, then such partition relations can hold. 
The Galvin-Prikry theorem, stated next, says that Borel partition of $[\omega^\omega]$
have homogeneous sets. 

\begin{theorem}[Galvin-Prikry]
Let $[\omega]^{\omega} = P_0 \cup P_1 \cup \ldots \cup P_{k-1}$ be a partition of $[\omega]^{\omega}$, 
with each $P_i$ is Borel (in the usual topology on the Baire space).
Then there exists an infinite $H \subseteq \omega$ which is homogeneous for 
the partition. That is, there is an $i<k$ such that $[H]^\omega \subseteq P_i$. 
\end{theorem}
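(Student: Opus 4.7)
The plan is to prove Galvin-Prikry via the notion of completely Ramsey sets, showing this class contains all Borel sets, and then specializing to the basic open set $[\emptyset,\omega]$. Call a set $S\subseteq [\omega]^\omega$ \emph{completely Ramsey} if for every Ellentuck basic open $[a,A]$ there exists an infinite $B\subseteq A$ with $[a,B]\subseteq S$ or $[a,B]\cap S=\emptyset$. A routine induction reduces the $k$-partition statement to the case $k=2$: given $P_0\cup\cdots\cup P_{k-1}$, apply the $k=2$ case to $P_0$ versus its complement; if the resulting homogeneous $H$ avoids $P_0$, restrict the partition $P_1,\ldots,P_{k-1}$ to $[H]^\omega$ and iterate on the induction hypothesis. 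So throughout I would assume $k=2$ and write $P=P_0$.

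The core of the argument is to show, by induction on Borel rank, that every Borel set is completely Ramsey. The base case is that open sets in the usual Baire-space topology are completely Ramsey. For a basic open set of the usual topology (determined by an initial segment), this is essentially a repeated-pigeonhole construction: inductively thin $A$ to decide, for each candidate extension of $a$, whether the resulting basic open lies inside or outside $S$; a diagonal/fusion of these thinnings produces the desired $B\subseteq A$. For a general open set $S$, one handles it by the same Nash-Williams-style combinatorial argument, treating $S$ as a union of the basic opens it contains and using the fact that an infinite set can be found that either falls inside $S$ or avoids it along every initial extension of $a$. Closure under complements is immediate from the definition.

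The hard step, and the main obstacle, is closure under countable unions: if $S=\bigcup_n S_n$ with each $S_n$ completely Ramsey, show $S$ is completely Ramsey. Given $[a,A]$, the plan is a fusion argument. Construct a decreasing sequence $A\supseteq A_0\supseteq A_1\supseteq\cdots$ of infinite sets and an increasing sequence of finite ``stems'' so that at stage $n$, for every finite set $a'\supseteq a$ compatible with the stem built so far, the completely Ramsey property of $S_n$ has been applied to $[a',A_n]$ to decide $S_n$ one way or another on a shrunken piece. Diagonalizing produces $B\subseteq A$ such that for every $n$ and every relevant $a'$, $[a',B]$ is either entirely in $S_n$ or disjoint from $S_n$. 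Then either some $[a',B]$ lies inside some $S_n$ (hence inside $S$, and one further shrinking gives $[a,B']\subseteq S$), or no such $a'$ and $n$ exist, in which case $[a,B]\cap S=\emptyset$. Executing this cleanly requires enumerating the relevant finite $a'$ and carefully interleaving the shrinkings, which is the technical heart of the theorem.

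Having established that all Borel sets are completely Ramsey, the Galvin-Prikry conclusion is immediate: apply the completely Ramsey property of $P_0$ to the basic open set $[\emptyset,\omega]$ to obtain an infinite $H\subseteq\omega$ with $[\emptyset,H]=[H]^\omega$ either contained in $P_0$ or disjoint from $P_0$; in the latter case $[H]^\omega\subseteq P_1$. In either case $H$ is homogeneous, which completes the proof.
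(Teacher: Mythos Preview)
The paper does not prove the Galvin--Prikry theorem at all; it is stated without proof as a classical background result and then invoked in the subsequent arguments (indeed, the paper immediately follows the statement with ``Silver extended this result\ldots'' and moves on). So there is no proof in the paper to compare your proposal against.

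That said, your outline is a recognizable and essentially correct sketch of one standard route to Galvin--Prikry: establish that the completely Ramsey sets contain the Borel $\sigma$-algebra by proving (i) open sets are completely Ramsey, (ii) closure under complements, and (iii) closure under countable unions via a fusion construction, and then specialize to $[\emptyset,\omega]$. The reduction from $k$ to $2$ pieces is fine. The main technical burden, as you correctly identify, is the fusion step; your description of it is high-level but not wrong. One small remark: a cleaner organization, and the one the paper's surrounding discussion alludes to via the Ellentuck theorem, is to show instead that the Ramsey null sets form a $\sigma$-ideal (equivalently, meager $=$ nowhere dense in the Ellentuck topology) and then observe that Borel sets have the Baire property in any topology, hence are completely Ramsey by Ellentuck's characterization. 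This sidesteps a direct induction on Borel rank. Either route works, but since the paper supplies no proof, your write-up would stand on its own rather than as a comparison.
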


Silver extended this result to include all $\bS^1_1$ partitions. Assuming Woodin's
axiom $\ad^+$, every partition $P$ of $[\omega]^\omega$ into finitely many 
pieces has a homogeneuous set. For this paper, however, it will be enough to consider Borel partitions.

A set $S \subseteq [\omega]^\omega$ is called {\em Ramsey} if there is an $H\in [\omega]^\omega$
such that $[H]^\omega \subseteq B$, or $[H]^\omega \subseteq [\omega]^\omega]\sm B$. In other words,
$S$, viewed as a partition of $[\omega]^\omega$ into two pieces, has a homogeneous set. 
Thus, the Galvin-Prikry theorem asserts every Borel set in $[\omega]^\omega$ is Ramsey. 
A somewhat stronger notion is that of $S$ being {\em completely Ramsey}. 
This means that for every basic open set $[a,A]$ we have an infinite $H\subseteq A$
such that $[a,H]\subseteq S$ or $[a,H]\subseteq S^c$. 
In fact, the Galvin-Prikry theorem asserts that every Borel set $S\subseteq [\omega]^\omega$ is completely 
Ramsey.

A basic fact about the Ellentuck topology is that a set $S \subseteq [\omega]^\omega$ is 
nowhere dense iff for every basic open set $[a,A]$ there is a 
$B\in [A]^\omega$ such that $[a,B] \subseteq S^c$. This differs from the definition of nowhere dense in that 
we do not need to change the stem $a$, but just need to thin out the set $A$. 
Also a set $S \subset [\omega]^\omega$ is meager iff it is nowhere dense. 
Thus, a set $S$ is meager iff for every $[a,A]$ there is a $B\in [A]^\omega$ such that 
$[a,B]\subseteq S^c$. The meager sets, as usual, form a $\sigma$-ideal on $[\omega]^\omega$.

Recall that a set $S$ in a topological space has the Baire property iff there is an open set $U$
such that $S\triangle U$ is meager. We have the following characterization of the completely Ramsey 
sets.

\begin{theorem}[Ellentuck]
A set $B \subseteq [\omega]^{\omega}$ is completely Ramsey iff it has the Baire property in the 
Ellentuck topology. 
\end{theorem}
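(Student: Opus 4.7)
The plan is to prove the two implications separately, with the reverse direction relying on two auxiliary lemmas proved by fusion.

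For the forward direction, let $U$ be the union of all basic open sets $[a, A]$ contained in $B$. Then $U$ is Ellentuck-open and $U \subseteq B$, so $B \triangle U = B \setminus U$, and it suffices to show $B \setminus U$ is meager. Given any basic $[a, A]$, the completely Ramsey property supplies $H \in [A]^\omega$ with either $[a, H] \subseteq B$ or $[a, H] \subseteq B^c$. In the first case $[a, H] \subseteq U$ by definition of $U$, so $[a, H] \cap (B \setminus U) = \emptyset$; in the second, $[a, H] \cap B = \emptyset$, so again $[a, H]$ is disjoint from $B \setminus U$. By the characterization of meager sets in the Ellentuck topology stated just before this theorem, $B \setminus U$ is meager, and hence $B$ has the Baire property.

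For the reverse direction, suppose $B = V \triangle M$ with $V$ Ellentuck-open and $M$ meager. I would establish two lemmas: that every Ellentuck-open set is completely Ramsey, and that every meager set $M$ is \emph{Ramsey null}, meaning that for every basic $[a, A]$ there is $H \in [A]^\omega$ with $[a, H] \cap M = \emptyset$. Granting these, given $[a, A]$ one first applies the open-set lemma to $V$, obtaining $H_1 \in [A]^\omega$ with $[a, H_1] \subseteq V$ or $[a, H_1] \cap V = \emptyset$, and then applies Ramsey-nullness of $M$ to thin $H_1$ to $H \in [H_1]^\omega$ with $[a, H] \cap M = \emptyset$. In the first subcase $[a, H] \subseteq V \setminus M \subseteq B$, and in the second $[a, H] \cap B = [a, H] \cap (V \triangle M) = \emptyset$, so $[a, H] \subseteq B^c$. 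Either way the completely Ramsey property holds.

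Both lemmas are proved by a fusion argument. For the meager lemma, write $M \subseteq \bigcup_n N_n$ with each $N_n$ nowhere dense, and build a descending chain of infinite sets $A = A_0 \supseteq A_1 \supseteq \cdots$ and points $x_0 < x_1 < \cdots$ with $x_k \in A_k$, arranged so that at stage $k$, for each $n \leq k$ and each finite $s \subseteq \{x_0, \ldots, x_{k-1}\}$ one has $[a \cup s, A_k] \cap N_n = \emptyset$. Each stage requires only finitely many applications of the nowhere-dense characterization and preserves infiniteness. The diagonal $H = \{x_0, x_1, \ldots\}$ then satisfies $[a, H] \cap M = \emptyset$, since any member of $[a, H]$ has the form $a \cup s \cup T$ with $s$ a finite subset of some initial segment $\{x_0, \ldots, x_{k-1}\}$ and $T \subseteq A_k$. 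The open-set lemma proceeds by a parallel fusion in which at each stage one also thins to decide whether the current stem $a \cup s$ places the extension inside some basic open constituent of $V$; the combinatorial core here is a Nash-Williams style Ramsey fact for families of finite subsets of $\omega$.

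The main obstacle will be the fusion bookkeeping: at stage $k$ one must discharge finitely many conditions indexed by the substems of a growing initial segment against a growing list of nowhere-dense sets, and verify that the limiting $H$ is genuinely infinite and simultaneously meets every requirement. Since each stage imposes only finitely many thinnings and each preserves infiniteness, a careful enumeration pushes the construction through, but ensuring the correct compatibility, namely that every point of $[a, H]$ lands in some previously handled $[a \cup s, A_k]$, is the step most prone to slip-ups.
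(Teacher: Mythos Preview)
The paper does not prove this theorem at all: it is stated in \S\ref{section:Ramsey} as a known result due to Ellentuck, with no accompanying argument, and is used only as background for the notion of Ramsey largeness. There is therefore no ``paper's own proof'' to compare your proposal against.

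That said, your outline is the standard route to Ellentuck's theorem. The forward direction is clean and correct as written. For the reverse direction, note that the paper already records (without proof) the two facts you isolate as lemmas: that in the Ellentuck topology a set is nowhere dense iff every $[a,A]$ can be thinned to some $[a,B]$ disjoint from it, and that meager coincides with nowhere dense. Granting those, your Ramsey-null lemma is immediate and no fusion over countably many $N_n$ is needed; the real work lies entirely in the open-set lemma, where the fusion/Nash--Williams argument you gesture at is indeed the substantive step. Your sketch of that step is accurate in spirit but thin on the combinatorics of deciding, for each finite stem, whether some basic open constituent of $V$ is eventually entered; filling that in carefully is where the proof actually lives.
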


We say a set $S\subseteq [\omega]^\omega$ is Ramsey small (or Ramsey measure $0$) 
iff it is meager in the Ellentuck topology,
and call it Ramsey large (or Ramsey measure $1$) if it is comeager in this topology. A set which is not Ramsey 
small is called Ramsey positive. Note that a set $S$ with the Baire property in the 
Ellentuck topology is Ramsey positive iff it contains a non-empty basic open set $[a,A]$.

In this section we appply the notion of Ramsey largeness to the study of oracles separating 
complexity classes. We have two goals in mind in proving these results. The first is to generalize the 
known results concerning the largeness of these collections 
with respect to measure and category. The second is to provide a possible plausible 
version of the random oracle hypothesis, which we discuss below.

We first show that the set $\bm{O'}$ of oracles (viewed as infinite subsets of $\omega$) 
$K$ such that $\np^K\neq \cnp^K$ is Ramsey positive. Recall we are identifying oracles $K$
with subsets of $\omega$ by our fixed enumeration of the finite strings.

\begin{theorem}\label{Ohomo}
The set $\bm{O'}$ is Ramsey positive. That is, 
there exists a homogeneous  $H \subseteq \omega$ such that for all infinite oracles $K$ 
consisting of strings from $H$, $\np^K \neq \cnp^K$.  
\end{theorem}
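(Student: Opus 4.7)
The plan is to build the set $H$ explicitly as a disjoint union $H = \bigsqcup_n B_n$ of blocks $B_n \subseteq \{0,1\}^{l_n}$ of strings sharing a common length, with the lengths $l_0 < l_1 < \cdots$ growing so rapidly (say $l_{n+1} > l_n^{l_n}$) that a polynomial-time computation on input $1^{l_n}$ cannot possibly query a string from a later block $B_m$ with $m > n$. The witness language will be the standard Baker-Gill-Solovay choice $L_K = \{1^n : K \cap \{0,1\}^n \neq \emptyset\}$, which lies in $\np^K$ for any oracle $K$; the goal is to arrange that $\overline{L_K}\notin\np^K$ for every infinite $K\subseteq H$, which then gives $\np^K \neq \cnp^K$ for each such $K$.

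First, I would fix an enumeration of pairs $(N_{i_n},k_n)$ of NP-machine indices and polynomial degrees in which each pair appears infinitely often, and assign $(N_{i_n}, k_n)$ to stage $n$. The block $B_n$ will be chosen large (say $|B_n|\ge 2^{l_n/2}$) so that the polynomial query budget $l_n^{k_n}$ can examine only a vanishing fraction of $B_n$. Because queries from input $1^{l_n}$ cannot reach later blocks, for each possible partial oracle $P \subseteq B_0 \cup \cdots \cup B_{n-1}$ the nondeterministic computation $N_{i_n}^P(1^{l_n})$ has a well-defined value $\alpha(P) \in \{0,1\}$. Whenever an oracle $K \supseteq P$ has $K \cap B_n$ disjoint from the polynomially many strings queried by this computation, $N_{i_n}^K(1^{l_n}) = \alpha(P)$, while $\overline{L_K}(1^{l_n}) = [K \cap B_n = \emptyset]$; disagreement occurs when $\alpha(P) = 1$ and $K$ meets $B_n$ inside the unqueried region, or when $\alpha(P) = 0$ and $K$ misses $B_n$ entirely. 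The construction of $B_n$ encodes this dichotomy by trimming $B_n$ down to the unqueried region for every past $P$ simultaneously, which still leaves enormous flexibility because there are only finitely many such pasts.

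The hard part will be showing that no infinite $K \subseteq H$ can systematically evade the diagonalization at every stage devoted to a given pair $(i, k)$. At each individual stage only one side of the dichotomy (``$K$ meets $B_n$'' or ``$K$ misses $B_n$'') is caught, and the ``safe'' side is dictated by the machine's own behavior on $K$'s past, so an argument is needed that over the infinitely many stages assigned to $(i,k)$ every asymptotic pattern of $K$ with respect to the blocks is eventually forced into a disagreement. The planned approach here is a pigeonhole-style combinatorial argument exploiting the rapid growth of $|B_n|$ and the infinitude of $K$: an infinite $K \subseteq H$ meets infinitely many blocks, and the sequence of pasts $K \cap (B_0\cup\cdots\cup B_{n-1})$ cannot consistently produce $\alpha$-values that align with $K$'s future meeting/missing behavior when the blocks are designed to decorrelate those values. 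Once this combinatorial step is in place, the theorem follows; the Borel character of $\bm{O'}$ (via the analog of Theorem~\ref{ocomu}) then places the resulting $[H]^\omega \subseteq \bm{O'}$ in the proper Galvin-Prikry framework, so that this is a genuine Ellentuck-open neighborhood witnessing Ramsey positivity.
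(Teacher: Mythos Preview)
Your plan attempts something strictly harder than what the paper does, and the step you flag as ``the hard part'' is genuinely unfilled.

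The paper never builds an $H$ with $[H]^\omega \subseteq \bm{O'}$ directly. Instead it exploits Galvin--Prikry as a \emph{reduction}: since $\bm{O'}$ is Borel, it suffices to produce a set $A$ such that for every infinite $B \subseteq A$ there \emph{exists} an infinite $C \subseteq B$ with $C \in \bm{O'}$; Galvin--Prikry then supplies the homogeneous $H$ automatically. This is the real role of the theorem you invoke only as a closing formality, and missing it is what forces you into the combinatorial problem. The paper also takes $A$ to contain \emph{exactly one} string of each chosen length $f(n)$, selected so as to avoid the accepting-path queries of the first $n$ nondeterministic machines relative to every subset of $A_{n-1}$. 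Given an arbitrary infinite $B \subseteq A$, one then \emph{constructs} $C \subseteq B$ by a standard diagonalization: at the $n$th length present in $B$ one decides whether or not to include the single available string so as to force an error for the $n$th machine on $\overline{L_C}$. Because one gets to choose $C$, there is no adversarial subset to handle.

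Your ``pigeonhole-style'' sketch does not close the gap you identified. Consider the trivially accepting machine $N^*$: at every stage $n$ assigned to $(N^*,k)$ one has $\alpha(P)=1$ for every past $P$, and by your own dichotomy the only way to catch it at such a stage is to have $K$ meet $B_n$. Hence any infinite $K \subseteq H$ that avoids $B_n$ at all $(N^*,k)$-stages (and, say, picks one string from $B_n$ at every stage assigned to the always-rejecting machine, so as to remain infinite) is never caught by your diagonalization at any stage assigned to $(N^*,k)$. Of course $N^*$ does not decide $\overline{L_K}$, but your scheme only inspects the input $1^{l_n}$ at stages assigned to the machine under test, so it never sees those errors. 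Extending the check to all lengths destroys your trimming guarantee, since $B_n$ was pruned only against the single machine $(i_n,k_n)$. And there is no ``decorrelation'' you can impose on the blocks that changes $\alpha$: the value $\alpha(P)$ depends only on the machine and on $P$, not on how $B_n$ is chosen.

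The cure is precisely the paper's move: drop the blocks, keep one string per length, prove the weaker ``for all $B$ there exists $C$'' statement, and let Galvin--Prikry upgrade it for free.
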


\begin{proof}
We established in \S\ref{section:descriptiveComplexity} that $\bm{O'}$ is a Borel set.
By the Galvin-Prikrey theorem, this means that the set is completely Ramsey. To show 
$\bm{O'}$ is Ramsey positive it suffices to get an $A\in [\omega]^\omega$ 
such that $[A]^\omega \subseteq \bm{O'}$. By Galvin-Prikry it suffices to 
construct an $A$ such that for every infinite $B\subseteq A$ there is an infinite $C\subseteq B$
such that $\bm{NP}^C \neq \cnp^C$. We proceed to construct such a set $A$.

We fix a sufficiently fast growing function $f \colon \omega \to \omega$, say 
with $f(n+1) > f(n)^{\log(f(n))}$, and we may also assume that 
$n 2^{n-1} f(n)^{\log(f(n))} < 2^{f(n)}$as the function $2^x$ grows faster than $x^{\log(x)}$. 
Our set $A$ will have exactly $1$ string of length $f(n)$ for all $n$. 
We construct $A$ in stages, and refer to the partially constructed set at stage $n$ by $A_n$.
At stage $n$, we run for each $m \leq n$ and each subset $S\subseteq A_{n-1}$, 
the $m$th $\bm{NP}$ machine on the partial oracle $S$ 
with input $1^{f(n)}$ for  $f(n)^{\log(f(n))}$ many steps. For each such pair $(m,S)$,
if this run of the $m$th $\bm{NP}$ machine results in an acceptance, then there is a valid path in the 
configuration graph for the machine of length at most $f(n)^{\log(f(n))}$ which terminates in
an acceptance state. This particular computation path queries at most 
$f(n)^{\log(f(n))}$ many strings of length $f(n)$. We fix a set $E(m,s)$ of strings of length $f(n)$
and $|E(m,S)| \leq f(n)^{\log(f(n))}$ which contains this set of queries. Note that in this case 
that if $A'$ agrees with $A_{n-1}$ on all strings of length $< f(n+1)$ except 
for strings of length $f(n)$, and if $A' \cap E(m,S)=\emptyset$, then the $m$th $\bm{NP}$
machine run with the oracle $A'$ (with input $1^{f(n)}$ for 
$f(n)^{\log(f(n))}$ many steps) will also terminate in acceptance.
Let $E_n =\cup \{ E(m,S)\colon m \leq n \wedge S\subseteq A_{n-1}\}$. 
We have $|E_n|\leq n 2^{n-1} f(n)^{\log(f(n))} <2^{f(n)}$. Thus there is a string $s_n$ 
of length $f(n)$ with $s_n \notin E_n$. We let $A_n= A_{n-1} \cup \{ s_n\}$. 
We let $A=\bigcup_n A_n$. This completes the definition of the set $A$.

Suppose now $B\in [A]^\omega$ is an infinite subset of $A$. We show that there is a $C\in [B]^\omega$
such that $\np^C\neq \cnp^C$. Let $g(n)$ be the $n$th element of $B$. Clearly $g(n)\geq f(n)$. 
We construct $C\subseteq B$ by a diagonalization argument similar to our previous proofs. 
We construct $C$ in stages and let $C_i$ be the set constructed at stage $i$. 
We will have $C_i \subseteq B\cap [0,g(i)]$. 
Assume $C_{n-1}$ has been defined. We run the $n$th $\bm{NP}$ machine on the input 
$1^{g(n)}$ for $g(n)^{\log(g(n))}$ steps relative to the oracle $C_{n-1}$. If this computations halts 
in an acceptance, that is there is an accepting path in the configuration graph of the 
non-deterministic computations, then we let $C_n=C_{n-1}\cup \{ s_n\}$, where 
$s_n$ is the unique string of length $g(n)$ in $B$. Otherwise we set $C_n=C_{n-1}$.

It remains to show that $\np^C \neq \cnp^C$. 
The language witnessing this difference will be the standard Solovay language $L_C$
(the set of strings $1^k$ such that there is a string of length $k$ in $C$), 
which as always is clearly in $\np^C$.
Suppose it were also in $\cnp^C$. Then $\co L_C$ 
(the complement of $L_C$) would be in $\bm{NP}^C$. That is, there would exist polynomial time 
(say time $n^k$ for a fixed $k \in \omega$) relativized nondeterministic machine $N^C$ which accepts it.
Pick an $m$ sufficiently large so that $g(m)^{\log(g(m))} > g(m)^k$, and where the $m$th non-deterministic machine 
$N^C_m$ is $N$.  Consider the operation of this machine, run for   $g(m)^{\log(g(m))}$ steps, 
on input $1^{g(m)}$ relative to the oracle $C_{n-1}$. Suppose first this computations halts with an acceptance. 
Then $C_n=C_{n-1}\cup \{ s_n\}$, and $s_n$ is not queried in some accepting path for the nondeterministic 
computation. Thus $N_m^{C_n}(1^{g(n)})$ also halts with an acceptance. Since $g(n+1) \geq f(n+1)
\geq f(n)^{\log(f(n))}$, it then follows that $N_m^C(1^{g(n)})$ also halts with an acceptance. 
Since there is a string of length $g(n)$ in $C$, namely $s_n$, the machine $N_m^C$
has made an error on the input $1^{g(m)}$. Suppose next that $N_m^{C_{n-1}}(1^{g(n)})$ 
does not halt with an acceptance when run for $g(m)^{\log(g(m))}$ many steps. So, $C_{n}=C_{n-1}$. 
So, $N_m^{C_n}(1^{g(m)})$ also does not halt with an acceptance, and 
likewise $N_m^C(1^{g(m)})$ does not halt with an acceptance when run this many steps. Since 
$1^{g(m)} \notin L_c$ in this case, the machine $N_m^C$ has made an error on the input $1^{g(m)}$
in this case as well. We have confirmed that no polynomial time relativized 
nondeterministic Turing machine can possibly decide $\co L_C$, completing the proof. 
\end{proof}

Since $\bm{O'}\subseteq \bm{O}$, we immediately have the following.

\begin{corollary} \label{cor:Ohomo}
Both $\bm{O}$ and $\bm{O'}$ are Ramsey positive. 
\end{corollary}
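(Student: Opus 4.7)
The plan is to deduce the corollary directly from Theorem~\ref{Ohomo} together with a set-theoretic containment $\bm{O'} \subseteq \bm{O}$ and the monotonicity of Ramsey positivity under supersets. First, I would observe that for any oracle $A$, if $\np^A \neq \cnp^A$ then necessarily $\bm{P}^A \neq \np^A$: indeed, $\bm{P}^A$ is closed under complementation (a poly-time deterministic decider can simply flip its answer), so an equality $\bm{P}^A = \np^A$ would immediately yield $\np^A = \cnp^A$. Contrapositively, $A \in \bm{O'}$ forces $A \in \bm{O}$, giving $\bm{O'} \subseteq \bm{O}$.

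Next, I would invoke the fact, recorded in the discussion preceding the statement, that the meager sets in the Ellentuck topology form a $\sigma$-ideal. In particular they are closed under taking subsets, so the complementary class of Ramsey positive sets is closed under taking supersets. Since Theorem~\ref{Ohomo} establishes that $\bm{O'}$ is Ramsey positive, and $\bm{O'} \subseteq \bm{O}$, it follows that $\bm{O}$ is Ramsey positive as well. This simultaneously gives both assertions of the corollary.

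There is no real obstacle; the corollary is essentially a one-line consequence of Theorem~\ref{Ohomo}. The only subtle point worth flagging explicitly in the written proof is the use of closure of $\bm{P}$ under complements to justify the containment $\bm{O'} \subseteq \bm{O}$, since the analogous containment is not always available when comparing other pairs of complexity classes.
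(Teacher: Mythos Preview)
Your proposal is correct and follows exactly the paper's approach: the paper derives the corollary in one line from Theorem~\ref{Ohomo} via the containment $\bm{O'}\subseteq \bm{O}$, and you have simply spelled out why that containment holds and why Ramsey positivity passes to supersets.
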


A similar argument applied on top of the argument from Theorem~\ref{Qhard} 
will separate $\np$ from $\bm{BQP}$ according to the following theorem.  

\begin{theorem}\label{Qhomo}
The set $\bm{Q}$ is Ramsey positive. That is, there exists a homogeneous set $H\in [\omega]^\omega$ 
such that for any $C \in [H]^\omega$ we have $\np^C \nsubseteq \bqp^C$. 
\end{theorem}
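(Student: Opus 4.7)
The plan is to combine the Borel plus Galvin--Prikry framework of Theorem~\ref{Ohomo} with the quantum diagonalization of Theorem~\ref{Qhard}. Since $\bm{Q}$ is Borel by Theorem~\ref{Qub}, the Galvin--Prikry theorem makes $\bm{Q}$ completely Ramsey. Thus it suffices to produce a single $A\in[\omega]^\omega$ with the property that every infinite $B\subseteq A$ contains an infinite $C\subseteq B$ with $\np^C \nsubseteq \bqp^C$; by complete Ramseyness this forces $[A]^\omega\subseteq \bm{Q}$.

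To build $A$, I would fix a fast-growing $f$ (for concreteness $f(n)=2^{2^{2^n}}$), satisfying $f(n+1)>f(n)^{\log f(n)}$ as in Theorem~\ref{Qhard}, and place exactly one string $s_n$ of length $f(n)$ in $A$ at stage $n$. The key bookkeeping is a ``forbidden set'' $E_n$ of strings of length $f(n)$: for each QTM index $m\leq n$ and each subset $S\subseteq A_{n-1}$, run the $m$-th QTM on input $1^{f(n)}$ relative to $S$ for $T=f(n)^{\log f(n)}$ steps, record the superpositions $\ket{\phi_i^{m,S}}$, and include in $E(m,S)$ every string $y$ of length $f(n)$ with $\sum_{i<T} q_y(\phi_i^{m,S}) \geq \tfrac{1}{1568T}$. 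Exactly as in Theorem~\ref{Qhard}, $|E(m,S)|\leq 1568 T^2$, so $|E_n|\leq n\cdot 2^{n-1}\cdot 1568 T^2$, which is far less than $2^{f(n)}$ given the growth of $f$. Pick $s_n$ of length $f(n)$ outside $E_n$ and set $A_n=A_{n-1}\cup\{s_n\}$.

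Given an infinite $B\subseteq A$ enumerated as $g(n)=f(j_n)$, I would diagonalize to get $C\subseteq B$ with $\np^C\nsubseteq\bqp^C$ via the Solovay language $L_C=\{1^{g(n)}:C\text{ contains a string of length }g(n)\}$, which lies in $\np^C$ for any $C$. At stage $n$ we run the $n$-th QTM on $1^{g(n)}$ relative to $C_{n-1}$ for $g(n)^{\log g(n)}$ steps and inspect the superposition exactly as in Theorem~\ref{Qhard}: at the first time the probability of being in a halting state exceeds $\tfrac{1}{2}$, check whether the machine is predominantly accepting or rejecting, and add the unique length-$g(n)$ string of $B$, namely $s_{j_n}$, to $C_n$ precisely when we need to force an error. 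Because $n\leq j_n$ and $C_{n-1}\subseteq A_{j_n-1}$, the string $s_{j_n}$ was chosen in the $A$-construction to lie outside $E(n,C_{n-1})$, so Lemma~\ref{oracleAlter} with $\epsilon=\tfrac{1}{28}$ and Lemma~\ref{distDist} show that switching from $C_{n-1}$ to $C_n$ perturbs the final measurement probabilities by at most $\tfrac{4}{28}<\tfrac{1}{6}$. This suffices to flip any purported $\bqp^C$ machine deciding $L_C$ with error $\leq\tfrac{1}{3}$ into incorrectness at input $1^{g(n)}$, completing the diagonalization.

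The main obstacle will be verifying that $|E_n|<2^{f(n)}$ survives the union over all $2^{|A_{n-1}|}$ partial oracles; once $f$ is chosen to dominate both $T^2$ and the exponential in $n$ this becomes a routine calculation, but it is the crux, since it is what lets us freely add $s_{j_n}$ at any later diagonalization stage without disturbing earlier computations. A secondary subtlety is confirming that the forbidden set $E(n,C_{n-1})$ relevant to the $C$-stage diagonalization is indeed subsumed by $E_{j_n}$ from the $A$-construction; this uses only the monotonicity $C_{n-1}\subseteq A_{j_n-1}$ and the inequality $n\leq j_n$, which hold because $B\subseteq A$ is enumerated in the induced order.
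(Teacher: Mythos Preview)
Your proposal is correct and follows essentially the same approach as the paper's proof: build $A$ by choosing at each stage $n$ a single string $s_n$ of length $f(n)$ avoiding the union of the ``heavy query'' sets $E(m,S)$ over all $m\leq n$ and $S\subseteq A_{n-1}$, then diagonalize inside any infinite $B\subseteq A$ using the Solovay language and the Bernstein--Vazirani perturbation bound exactly as in Theorem~\ref{Qhard}. The crucial point you identify---that $E(n,C_{n-1})\subseteq E_{j_n}$ because $n\leq j_n$ and $C_{n-1}\subseteq A_{j_n-1}$, so $s_{j_n}$ is automatically safe---is precisely the mechanism the paper relies on.
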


\begin{proof}

As in the proof of Theorem~\ref{Ohomo} we construct an infinite set $A\subseteq \omega$ (again identified with
a set of strings) such that for any $B\in [A]^\omega$ there is a $C\in [B]^\omega$ such that 
$\np^C \nsubseteq \bqp^C$. We fix a fast growing $f \colon \omega \to \omega$ satisfying 
$f(n+1) > f(n)^{\log(f(n))}$ and $n2^n 1568 f(n)^{2\log(f(n))} < 2^{f(n)}$ for all $n$. 
We again define $A$ in stages, and let $A_n$ be the set defined at stage $n$. We will 
get $A_n$ from $A_{n-1}$ by adding at most one string of length $f(n)$. Let $M_m$ enumerate all of the 
quantum Turing machines, with each machine repeated infinitely often. At stage $n$,
suppose $A_{n-1}$ has been defined. 
For each $m\leq n$ and each $S\subseteq A_{n-1}$, consider running the $m$th quantum machine 
$M_m$ on the input $1^{f(n)}$ relative to the oracle $S$ for $T=f(n)^{\log(f(n))}$ many steps. 
Let $\ket{\phi}^{m,S}_t$ for $t \leq T$ denote the superposition at time $t$. If $y$ is a string of length $f(n)$, 
let $S_y$ denote the oracle $S_y=S\cup \{ y\}$. 
Let $E(m,S)$ denote the set of strings $y$ of length $f(n)$ 
such that $\sum_{i=0}^{T-1}q_y(\ket{\phi}_i^{m,S}) < \frac{1}{1568T}$, where as in Theorem~\ref{Qhard}
$q_y(\ket{\phi}_i^{m,S})$ the the sum of the squared magnitudes of the amplitudes of configurations in the 
superposition $\ket{\phi}_i^{m,S})$ corresponding to a query state in which the string $y$ is being queried. 
In the proof of Theorem~\ref{Qhard} we showed that $|E(m,S)|\leq 1568 T^2$. 
So we have:
\[|\bigcup_{m,S} E(m,S)| \leq n 2^n 1568 T^2= n 2^n 1568 f(n)^{2\log(f(n))} <2^{f(n)}.
\]
Let $A_n=A_{n-1} \cup \{y_n\}$ where $y_n$ is a string of length $f(n)$ not in 
$E_n=\bigcup_{m,S} E(m,S)$. This completes the definition of the set $A=\bigcup_n A_n$.

Suppose now $B\in [A]^\omega$. We construct a $C\in [B]^\omega$ such that 
$\np^C\nsubseteq \bqp^C$. In fact, we will have that the Solovay language 
\[
L_C=\{ 1^k \colon \,\text{there is a string of length $k$ in $C$} \}
\]
is not in $\bqp^C$.

Let $g(n)$ be the $n$th element of $B$, so $g(n)\geq f(n)$. 
We construct $C$ in stages as before, and at stage $n$ we get $C_n$ from $C_{n-1}$
by adding at most one string of length $g(n)$, which would have to be the unique string in $B$ of length $g(n)$. 
Suppose we are at stage $n$ and $C_{n-1}$ has been defined. Let $S=B\cap [0,g(n-1)]$. Consider the quantum machine
$M_n$ run on input $1^{g(n)}$ relative to the oracle $S$ for $T=g(n)^{\log(g(n))}$ many steps. 
Let $\ket{\phi}_t$ be the superposition of this computation at time $t \leq T$. 
Suppose first that there is a time $t \leq T$ such that  $\ket{\phi}_t$ has a greater than $\frac{1}{2}$ 
probability of being in a configuration which is a halting state which is a rejecting state. 
In this case we ste $C_{n}= C_{n-1}\cup \{ s_n\}$ where $s_n$ is the unique string in $B$ of length $g(n)$. 
In all other cases we set $C_n=C_{n-1}$. The completes the definition of $C$.

Suppose towards a contradiction that the Solovay language $L_C$ were in $\bqp^C$. 
Let $M$ be a quantum Turing machine, say with run time bounded by $n^k$ for input size $n$,
which decides the language $L_C$. Fix $m$ such that $M_m=M$ and 
$m$ is large enough so that $\log(m)>k$. Consider stage $m$ in the construction of $C$. 
Let $\ket{\phi'}_t$ for $t \leq T$ be the superposition at time $t$ for the
quantum computation $M_m^{C}(1^{g(m)})$, and let 
$\ket{\phi^{s_m}}_t$ denote the superposition for the computation 
$M_m^{C_{n-1}\cup \{ s_m\}}(1^{g(m)})$ where $s_nm\in A$ is the unique string in $A$ of length $g(m)$. 
Since $g(m+1)\gg g(m)$ we have (since $C_m$ is either $C_{m-1}$ or $C_{m-1}\cup \{ s_m\}$)
that $\ket{\phi'}_t$ is either $\ket{\phi}_t$ or $\ket{\phi^{s_m}}_t$.

Suppose first that $M_m^C(1^{g(m)})$ accepts, that is, there is a time $t_0 \leq T$ such that 
$\ket{\phi'}_{t_0}$ is in a superposition of halting configurations and there is a $>\frac{2}{3}$
probabilty that these halting states are accepting. Furthermore, in this case we must have that 
for all $t'<t_0$ that $\ket{\phi'}_{t'}$ is in a superposition of configurations, none of which is in 
a halting state (from the definition of a well-formed quantum Turing machine halting, 
see \cite{quantumComplexityTheory}). 
For such $t'$, $\ket{\phi}_{t'}$ has a $<\frac{1}{2}$ probability of being in a halting state 
since from  the choice of $s_m$ in the construction of $A$ and 
Lemma~\ref{oracleAlter} we have that $| \ket{\phi}_{t'} -\ket{\phi'}_{t'}|<\frac{1}{28}$
and so from Lemma~\ref{distDist} we have that the difference between the probability of 
$\ket{\phi}_{t'}$ being in a halting state and $\ket{\phi'}_{t'}$ being in a halting 
state is at most $\frac{1}{7}$. Thus, the time $t$ used in the definition of $C_m$ is equal to $t_0$. 
Also, the probability that $\ket{\phi}_{t_0}$ is in a halting state which rejects the input 
is at most $\frac{1}{3}+\frac{1}{7}<\frac{1}{2}$. So by definition of $C_m$ we have 
in this case that $C_m=C_{m-1}$, and so $1^{g(m)} \notin L_C$. Thus, the machine $M=M_m$
has made an error.

Suppose next that $M_m^C(1^{g(m)})$ rejects, and let $t_0$ again be the corresponding time.
As in the previous paragraph, the time $t$ as in the definition of $C_m$ is equal to $t_0$. 
Also as in the previous paragraph, the probability that $\ket{\phi}_{t_0}$ is a halting 
configuration which rejects the input is at least $\frac{2}{3}-\frac{1}{7}>\frac{1}{2}$. 
So by definition of $C_m$ we have $C_m=C_{m-1}\cup \{ s_m\}$, so $1^{g(m)} \in L_A$. 
Thus again the machine $M$ has made an error on the input $1^{g(m)}$. 

In both cases we conclude that it cannot be the case that $M$ actually decides $L_C$. 
Thus it cannot be the case that $L_C \in \bqp^C$, and so $\np^C\nsubseteq \bqp^C$. 

\end{proof}

Surprisingly, the analogous statement for $\bm{S}$, that $\bm{S}$ is Ramsey positive, 
appears to be stronger than the other two. We discuss this further in \S\ref{section:hypothesis} below.

\subsection{The Ramsey Oracle Hypothesis}\label{section:hypothesis}

In this section we present results which appear to suggest that the Ramsey version of the random oracle hypothesis
may hold. That is, if two complexity classes are equal when relativized to a Ramsey large set of oracles,
then the two unrelativized classes are equal. Note in this regard that Theorems~\ref{Ohomo} and \ref{Qhomo}
only show that for the classes considered there that the set of separating oracles is Ramsey positive. 
We will show for the classes considered in those results that if the collection of separating 
oracles is actually Ramsey large, then the corresponding unrelativized classes are equal. 
For the set $\bm{S}$ of oracles separating $\bm{PSPACE}$ and $\bm{PH}$, we will show the stronger
result that if $\bm{S}$ is just Ramsey positive, then $\bm{PSPACE}\neq \bm{PH}$.

The following result demonstrates that showing that $\bm{O}$ is Ramsey large is at 
least as difficult as the $\bm{P}= \bm{NP}$ problem itself.

\begin{theorem} \label{orl}
If the set $\bm{O}$ of oracles separating $\bm{P}$ and $\np$ is Ramsey large then
$\bm{P}\neq \np$.
\end{theorem}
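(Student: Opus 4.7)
The plan is to prove the contrapositive: assuming $\bm{P}=\bm{NP}$, I will show $\bm{O}$ is not Ramsey large. Since $\bm{O}$ is Borel by Theorem~\ref{ocomu}, the Galvin-Prikry theorem gives that $\bm{O}$ is completely Ramsey in the Ellentuck topology, hence has the Baire property there. For a set with the Baire property in Ellentuck, being Ramsey large (comeager) is equivalent to the complement containing no non-empty basic Ellentuck open set. So it suffices to exhibit a single non-empty $[a,A]\subseteq \bm{O}^c$; then $\bm{O}^c$ is Ramsey positive and $\bm{O}$ is not comeager.

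The candidate I will use is $[\emptyset, T_0]$, where $T_0\subseteq \omega$ is the set of indices, under the fixed string enumeration $s_0,s_1,\ldots$, of the tally strings $\{1^n : n\in\omega\}$. Elements of $[\emptyset, T_0]$ are precisely the infinite tally oracles. I will check that under $\bm{P}=\bm{NP}$, every such tally oracle $B$ satisfies $\bm{P}^B=\bm{NP}^B$, i.e., lies in $\bm{O}^c$.

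This is a standard advice argument. Fix $L\in\bm{NP}^B$, decided by a polynomial-time NTM $N^B$ in time $n^k$. On length-$n$ inputs $N^B$ can only query strings of length at most $n^k$, and since $B$ is tally, the only queries whose answer can depend on $B$ are to the $n^k+1$ tally strings $1^0, 1^1, \ldots, 1^{n^k}$. Thus the bit-string $a_n = B\cap\{1^0,\ldots,1^{n^k}\}$, of polynomial length, serves as advice, yielding $L \in \bm{NP}/\mathrm{poly}$. Under $\bm{P}=\bm{NP}$ the auxiliary language $\{(x,a):N(x,a)\text{ accepts}\}$ is in $\bm{P}$, so $L\in\bm{P}/\mathrm{poly}$ with the same advice. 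Finally, $a_n$ is itself polynomial-time computable from $B$ (just query each of the $n^k+1$ tally strings), so $L\in\bm{P}^B$. Combined with the trivial $\bm{P}^B\subseteq\bm{NP}^B$ this gives $\bm{P}^B=\bm{NP}^B$.

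Therefore $[\emptyset, T_0]\subseteq \bm{O}^c$, so $\bm{O}^c$ is Ramsey positive, $\bm{O}$ is not Ramsey large, and we reach the desired contradiction. The main technical step to verify is the tally-oracle advice argument above; once that is in hand, the Ellentuck / Baire-property conversion from Ramsey largeness to the existence of a basic open set in $\bm{O}^c$ is automatic from the Borel complexity of $\bm{O}$ established in Section~\ref{section:descriptiveComplexity}.
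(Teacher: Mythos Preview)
Your proposal is correct and follows essentially the same approach as the paper: both prove the contrapositive by taking $A$ to be the set of tally strings $\{1^n : n\in\omega\}$ and showing that under $\bm{P}=\bm{NP}$ every infinite $B\subseteq A$ satisfies $\bm{P}^B=\bm{NP}^B$, via the polynomial-size lookup-table/advice argument. The only cosmetic differences are that you spell out the Ellentuck/Baire-property justification for why $[\emptyset,T_0]\subseteq\bm{O}^c$ refutes Ramsey largeness (the paper leaves this implicit), and you phrase the machine simulation in advice-class language rather than as an explicit two-stage machine.
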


\begin{proof}
Assume $\bm{P}=\np$ and we produce an infinite set $A\subseteq \omega$ (which we are identifying
with a set of strings) such that for any $B\in [A]^\omega$ we have $\bm{P}^B=\np^B$, which
contradicts $\bm{O}$ being Ramsey large. Let $A=\{ 1^n \colon n \in \omega\}$, or more generally
we can take $A=\{ 1^{f(n)} \colon n \in \omega\}$ where $f \colon \omega \to \omega$ 
is any increasing function $f$. Suppose $B \in [A]^\omega$. We show $\np^B=\bm{P}^B$. 
Let $N^B$ be a relativized nondeterministic Turing machine which runs in polynomial time,
say bounded by $n^k$ where $n$ is the input length, and which decides a language $L$. We describe a 
deterministic machine $M$ such that $M^B$ runs in polynomial time and decides $L$. 
First, there is a deterministic machine $M_1^B$ which on an input of length $n$ 
computes a ``look-up table'' for $B$, that is, it records for all strings $s$ of the form $1^m$, for 
$m \leq n^k$, whether or not $s \in B$. Of course, for strings $s$ not of this form, we know
that $s \notin B$. Second, there is an unrelativized nondeterministic machine 
$N_2$ which takes as input a string $s$ of length $n$ and a string $t$ of length $\leq n^k$
and simulates the machine $N$ in the following sense. Whenever a computation path of the 
nondeterministic machine $N$ would query the oracle on a string $u$, the machine $N_2$ 
proceeds as if the answer is ``no'' if $u$ is not of the form $1^m$, and if $u$ 
is of this form then $N_2$ uses the answer ``yes'' iff $u$ has length $\leq n^k$ 
and $t(m)=1$. Thus, the string $t$ is viewed as coding the relevant part of an oracle $C$. 
By the assumption that $\bm{P}=\np$, there is a polynomial-time deterministic machine
$M_2$ which accepts the same language as $N_2$ (where the language is now viewed as a set of pairs 
$(s,t)$.)

The deterministic machine $M^B$ on $s$ of length $n$ first runs $M_1^B$ to generate the output 
string $t$ which codes the strings $1^m \in B$ for $m \leq n^k$. Then $M^B$ 
runs the machine $M_2$ of the input $(s,t)$. This will produce the same answer as $N^B(s)$. 
\end{proof}

At first glance one might expect that showing a similar result for $\bm{Q}$ to be more complicated 
since the analogous hypothesis (that $\bm{NP} \subseteq \bm{BQP}$) seems weaker. This however poses no problems.

\begin{theorem} \label{qrl}
If $\bm{Q}$ is Ramsey large, then $\bm{NP} \nsubseteq \bm{BQP}$
\end{theorem}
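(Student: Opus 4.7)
The plan is to proceed by contrapositive, in direct parallel with the proof of Theorem~\ref{orl}. I would assume $\np \subseteq \bqp$ unrelativized and construct an infinite $A \subseteq \omega$ (identified with a set of strings) such that for every $B \in [A]^\omega$ we have $\np^B \subseteq \bqp^B$, so that $[A]^\omega = [\emptyset,A] \subseteq \bm{Q}^c$. Since $[\emptyset,A]$ is a nonempty basic open set in the Ellentuck topology, by the characterization recorded in \S\ref{section:Ramsey}, $\bm{Q}^c$ would then be Ramsey positive, contradicting the assumption that $\bm{Q}$ is Ramsey large.

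For the construction, I would take $A = \{1^n : n \in \omega\}$, a tally set. Let $B \in [A]^\omega$ be arbitrary and suppose $L \in \np^B$ is decided by some relativized NTM $N^B$ with runtime bound $n^k$. I would describe a $\bqp^B$ algorithm for $L$ in two stages. Stage one: on input $s$ of length $n$, use oracle access to $B$ to build a classical lookup table $t$ recording, for each $m \leq n^k$, whether $1^m \in B$; this is a deterministic polynomial-time oracle computation, and so is in particular a legitimate first phase of an oracle QTM. Stage two: let $N_2$ be the unrelativized NTM which on input $(s,t)$ simulates $N$, answering any query about a string $u$ by consulting $t$ when $u = 1^m$ with $m \leq n^k$ and returning ``no'' otherwise (since then $u \notin B$ regardless). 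The language $L(N_2)$ lies in $\np$, so by hypothesis there is an unrelativized $\bqp$ machine $M_2$ deciding it. The overall machine that computes $t$ classically from $B$ and then runs $M_2(s,t)$ decides $L$ with the standard $2/3$ bounded-error guarantee.

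The main point requiring care will be verifying that the two-stage composition is a valid $\bqp^B$ computation. This is handled by the Bernstein--Vazirani conventions in \cite{quantumComplexityTheory}: classical deterministic oracle computations embed into oracle QTMs, and any standard $\bqp$ machine can be invoked as a subroutine on the resulting classical register $(s,t)$. Because stage one is exact (no amplitude leakage) and stage two preserves its error bound, no perturbation analysis of the kind used in Theorem~\ref{Qhard} is needed here -- the tally structure of $A$ guarantees that every oracle $B \subseteq A$ can be queried over the relevant length window in polynomial time, which is exactly what collapses the quantum version of the argument to its classical analogue.
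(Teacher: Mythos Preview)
Your proposal is correct and follows essentially the same route as the paper: both take $A=\{1^n:n\in\omega\}$, build a classical lookup table for $B\subseteq A$ over the relevant lengths, replace the relativized NTM $N^B$ by an unrelativized NTM $N_2$ that consults the table, apply the hypothesis $\np\subseteq\bqp$ to obtain an unrelativized $\bqp$ machine for $L(N_2)$, and concatenate. Your discussion of why the two-stage composition is a valid $\bqp^B$ computation is in fact more explicit than the paper's own treatment.
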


\begin{proof}
Consider again the set $A = \{1^n: n \in \omega\}$, and suppose that $\bm{NP} \subseteq \bm{BQP}$. 
Just as in Theorem~\ref{orl}, we show that for any $B \subseteq A$ that $\bm{NP}^B \subseteq \bm{BQP}^B$
which contradicts the hypotheis that $\bm{Q}$ is Ramsey large. 
Let $N^B$ be a relativized nondeterministic Turing machine which runs in polynomial time,
say bounded by $n^k$ where $n$ is the input length.
As in Theorem~\ref{orl},
there is a relativized deterministic machine $M_1^B$ which on input $s$ records whether or not 
$1^m \in B$ for all $m \leq |s|^k$. We can also view $M_1^B$ as a relativized polynomial time
quantum machine, since $\bm{P}\subseteq \bqp$. Let $N_2$ be the unrelativized nondeterministic 
machine of Theorem~\ref{orl}. Since we are assuming $\np \subseteq \bqp$, there 
is an unrelativized polynomial time quantum machine $Q$ which accepts the same language as 
$N_2$. As before, by concatenating the machines $M_2^B$ and $Q$ we obtain a 
$\bqp^B$ machine which accepts the same language as $N^B$. 

\end{proof}

As we mentioned earlier, the analogous statement for $\bm{S}$ is stronger than the other two. 
Simply assuming the set $\bm{S}$ is Ramsey positive is enough to prove $\bm{PH} \neq \bm{PSPACE}$. 
To highlight the difference we make the following definition.

\begin{definition} \label{def:tame}
An oracle $O\subseteq 2^{<\omega}$ is {\em tame} if it has at most one string of any 
given length. We say $O$ is {\em very tame} if there is a polynomial time computable tame
oracle $A$ such that $O\subseteq A$. 
\end{definition}

In Theorems~\ref{orl} and \ref{qrl}, the relevant property of the oracle $\{ 1^n \colon n \in \omega\}$
is that is very tame. The point is that relativized $\bm{NP}^B$ machines are capable of creating lookup tables 
for tame oracles, whereas relativized $\bm{P}$ machines are presumably only capable of doing the same 
for very tame oracles. So, for the class $\bm{S}$ of oracles separating $\bm{PSPACE}$ from $\bm{PH}$
we have the following result.

\begin{theorem} \label{Shomo}
If $\bm{S}$ is Ramsey positive, then $\bm{PSPACE} \nsubseteq \bm{PH}$.
\end{theorem}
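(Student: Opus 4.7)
The plan is to prove the contrapositive: assuming $\bm{PSPACE}\subseteq \bm{PH}$ (which is equivalent to $\bm{PSPACE}=\bm{PH}$ since $\bm{PH}\subseteq \bm{PSPACE}$ always), I would show $\bm{S}$ is not Ramsey positive. Since $\bm{S}$ is Borel by Theorem~\ref{Sub}, it is completely Ramsey by the Galvin--Prikry theorem, so failing to be Ramsey positive is equivalent to the statement that $\bm{S}$ contains no basic open set $[a,A]$. It therefore suffices, for each basic open $[a,A]$, to exhibit a single oracle $C\in [a,A]$ with $\bm{PSPACE}^C=\bm{PH}^C$.

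Given such $[a,A]$, I would pick one string of each length at which $A$ meets to form a tame set $B\subseteq A$ in the sense of Definition~\ref{def:tame}, and set $C=a\cup B$. Since $A$ is infinite and each length contains only finitely many strings, $A$ meets infinitely many distinct lengths, so $B$ is infinite and $C\in [a,A]$. The key feature of this $C$ is that $|C\cap \{0,1\}^{\le m}|\le m+|a|$ for every $m$, which is polynomial in $m$.

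To show $\bm{PSPACE}^C\subseteq \bm{PH}^C$, let $L\in \bm{PSPACE}^C$ be decided by a polynomial-space oracle machine $M^C$ whose queries on inputs of length $n$ all have length at most $n^k$. I would write
\[
x\in L \iff \exists D\,[\Phi_1(D)\wedge \Phi_2(D)\wedge \Phi_3(x,D)],
\]
where $D$ ranges over polynomial-size lists of strings of length at most $n^k$. Here $\Phi_1(D)$ asserts that every string listed in $D$ lies in $C$ (polynomially many direct oracle queries, so $\bm{P}^C$-decidable); $\Phi_2(D)$ asserts $\forall y\in \{0,1\}^{\le n^k}\,[y\in D\vee y\notin C]$ (a single universal quantifier followed by one oracle query, so $\cnp^C$-decidable); and $\Phi_3(x,D)$ asserts that $M$ accepts $x$ when its oracle queries are answered using $D$, with queries to strings outside $D$ receiving the answer ``no''. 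Because $\Phi_3$ makes no reference to the oracle, it is a pure $\bm{PSPACE}$ computation on the polynomial-size input $(x,D)$, and so lies in $\bm{PH}$ by the hypothesis. The conjunction $\Phi_1\wedge \Phi_2\wedge \Phi_3$ therefore sits at some fixed finite level of $\bm{PH}^C$, and prefixing by $\exists D$ keeps us inside $\bm{PH}^C$. Hence $L\in \bm{PH}^C$, giving $\bm{PSPACE}^C=\bm{PH}^C$, so $C\notin \bm{S}$, contrary to $[a,A]\subseteq \bm{S}$.

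The main obstacle is identifying the right sparseness condition on $C$; I expect the crucial observation to be that mere \emph{tameness}, rather than the \emph{very tame} condition used in Theorems~\ref{orl} and \ref{qrl}, suffices here because the weaker side of the simulation is $\bm{PH}$ rather than $\bm{P}$. The polynomial hierarchy can guess and then verify a polynomial-size lookup table for the relevant portion of the oracle, while polynomial time alone cannot. This asymmetry is also the reason Ramsey positivity (rather than Ramsey largeness) already forces $\bm{PSPACE}\ne \bm{PH}$: we have enough latitude inside any candidate open $[a,A]$ to produce a tame witness in $[a,A]$, whereas the arguments for $\bm{O}$ and $\bm{Q}$ were forced to use the specific oracles $\{1^n:n\in \omega\}$ and so only succeeded from the stronger hypothesis of Ramsey largeness.
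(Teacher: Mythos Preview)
Your proposal is correct and follows essentially the same approach as the paper: pass to a tame suboracle and use the hypothesis $\bm{PSPACE}=\bm{PH}$ to replace the unrelativized $\bm{PSPACE}$ simulation (on input plus polynomial-size lookup table) by a $\bm{PH}$ expression, prefixed by quantifiers that guess and verify the table. Your treatment is in fact slightly more careful than the paper's in two respects: you explicitly handle the stem $a$ of the basic open set (the paper silently takes $B\subseteq A$ tame and relies on the fact that adjoining the finite set $a$ does not change relativized complexity classes), and you make the completeness check $\Phi_2$ explicit as a $\cnp^C$ predicate, whereas the paper's phrase ``creates a lookup table by guessing a single string of each length and querying $B$'' leaves the need for this universal verification implicit.
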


\begin{proof}
Suppose that $\bm{PSPACE} = \bm{PH}$ and let $A$ be some oracle. It suffices to that there 
is an oracle $B\subseteq A$ such that $\bm{PSPACE}^B = \bm{PH}^B$, as this contradicts 
$\bm{S}$ being Ramsey positive. Let $B\subseteq $ be tame, and we show  $\bm{PSPACE}^B = \bm{PH}^B$.
Consider a language $L \in \bm{PSPACE}^B$, 
decided by the machine $M^B$ in space $n^k$. 
We can have a relativized $\bm{NP}^B$ machine which on input $s$ creates a  
lookup table $T_{|s|}$ for strings of length $\leq |s|^k$ by guessing a single string of each 
length up to $|s|^k$, and querying $B$ to check membership. This is possible since $B$ is tame. 
There is a nonrelativized $\bm{PSPACE}$ machine $M'$ which behaved identically
to $M^B$ provided it receives the correct lookup table, that is, 
$M'(T_{|s|},s) = M^B(s)$ for all strings $s$. 
From the hypothesis 
$\bm{PSPACE} = \bm{PH}$ we can get a nonrelativized,  $\bm{PH}$ expression $\varphi$
which on inputs $s$ and the lookup table $T$ will be equivalent to $M'$,
that is $M'(s,T)$ accepts iff $\varphi(s,T)$. The ``concatenation'' of the $\np$ machine 
$N^B$ and the $\bm{PH}$ expression $\varphi$ can be expressed by a relativized $\bm{PH}$ 
expression $\psi$ in which a extra block of existential quantifiers is used
to quantify over the existence of the lookup table $T$ as produced by $N^B$. 
Then for all strings $s$ we have $\psi^B(s)$ iff $M^B(s)$ accepts, which
shows $L\in \bm{PH}^B$. 
\end{proof}

From our results we see that Ramsey largeness is a very strong condition to have on 
sets of oracle separating two classes.
To reflect on why this is, consider an arbitrary oracle $A$. 
One might show that two classes relativized to $A$ are different with probability $1$, but this says nothing about the 
kinds of the problems doing the separating. To have that the relevant separating class of oracles be
Ramsey large is to say that one can always distill out of an arbitrary racle (by passing to a subset)
something which separates them, 
and that includes starting from oracles which are already very restricted, for example, tame or very tame oracles. 
To have a separating class of oracles be Ramsey positive is to 
have that the classes are evidently ``different enough'' that a tame oracle, despite its limitations, is 
still powerful enough to create a difference. Similarly, 
if the separating class of oracles is Ramsey large, then even a very tame oracle can separate
the classes.

In the case of $\bm{PH}$ vs $\bm{PSPACE}$, 
the classic separation involving exploiting the lack of small circuits deciding parity requires one to 
explicitly add multiple strings of the same length at every step. From our results then it seems 
likely that if one were to find a diagonalization method in which the oracle constructed is specifically tame, 
then that would lead to a proof that the classes were unequal, via an analogous argument to 
either Theorem~\ref{orl} or \ref{qrl}.  Moreover the restrictedness of tame oracles and of very tame oracles seems 
closely related to the power of $\bm{NP}$ machines and of $\bm{P}$ machines, respectively. The 
condition of being Ramsey positive, in the case again of $\bm{PH}$ and $\bm{PSPACE}$, might therefore 
be considered in another way: that they are so different that a confirmed easy $\bm{NP}$ computation 
is enough to confirm separation. Likewise by \ref{Qhomo} $\bm{NP}$ is so close to being incomparable 
to $\bm{BQP}$ that a confirmed easy $\bm{P}$ computation could be enough to show full separation. 
Such information seems potentially valuable in the pursuit of linking our wealth of oracle results 
with the world of complexity as it actually exists.

Finally, we mention some questions left open from this work. Concerning
the Ramsey results we ask:

\begin{question}
Do the converses of Theorems~\ref{orl}, \ref{qrl} hold?
\end{question}

In view of Theorems~\ref{orl}, \ref{qrl}, \ref{Shomo} it is natural to ask the following. 

\begin{question}
For which other notions of largeness, and for which other pairs of 
complexity classes does the random oracle hypothesis hold?
\end{question}

\bibliographystyle{plain}

\bibliography{bibliography}
\end{document}